\newcommand{\VertexSet}[1]{\mathcal{V}_{#1}}
\newtheorem*{thm*}{Theorem}
\newtheorem{theorem}{Theorem}[section]
\newtheorem{corollary}[theorem]{Corollary}
\newtheorem{lemma}[theorem]{Lemma}
\newtheorem*{fact*}{Fact}
\newtheorem{proposition}[theorem]{Proposition}
\newcounter{theoremalph}
\theoremstyle{definition}
\newtheorem{definition}[theorem]{Definition}
\newtheorem{claim}[theorem]{Claim}
\theoremstyle{remark}
\newtheorem{remark}[theorem]{Remark}
\newtheorem*{notation}{Notation}
\renewcommand{\ll}{\left\langle}
\newcommand{\rr}{\right\rangle}
\newcommand{\ls}{\left\{}
\newcommand{\rs}{\right\}}
\newcommand{\mbQ}{\ensuremath{\mathbb{Q}}}
\newcommand{\mbZ}{\ensuremath{\mathbb{Z}}}
\newcommand{\mbN}{\ensuremath{\mathbb{N}}}
\newcommand{\on}[1]{\operatorname{#1}}
\newcommand{\overbar}[1]{\mkern 2mu\overline{\mkern-2mu#1\mkern-2mu}\mkern 2mu}
\newcommand{\St}{\operatorname{St}}
\newcommand{\im}{\operatorname{im}}
\newcommand{\SL}[2]{\ensuremath{\operatorname{SL}_{#1}(#2)}}
\newcommand{\Sp}[2]{\ensuremath{\operatorname{Sp}_{#1}(#2)}}
\newcommand{\I}[1][n]{\mathcal{I}_{#1}}
\NewDocumentCommand {\Irel} { O{n} O{m} }{\mathcal{I}^{#2}_{#1}}
\newcommand{\Idel}[1][n]{\mathcal{I}_{#1}^{\delta}}
\NewDocumentCommand {\Idelrel} { O{n} O{m} }{\mathcal{I}^{\delta,#2}_{#1}}
\newcommand{\Isigdel}[1][n]{\mathcal{I}_{#1}^{\sigma,\delta}}
\NewDocumentCommand {\Isigdelrel} { O{n} O{m} }{\mathcal{I}^{\sigma,\delta,#2}_{#1}}
\newcommand{\IA}[1][n]{\mathcal{IA}_{#1}}
\NewDocumentCommand {\IArel} { O{n} O{m} }{\mathcal{IA}^{#2}_{#1}}
\NewDocumentCommand {\IAArel} { O{n} O{m} }{\mathcal{IAA}^{#2}_{#1}}
\NewDocumentCommand {\IAAstrel} { O{n} O{m} }{\mathcal{IAA}^{*,#2}_{#1}}
\providecommand{\Link}{\ensuremath\mathsf{Link}}
\providecommand{\Star}{\ensuremath\mathsf{Star}}
\providecommand{\mbZ}{\ensuremath\mathbb Z}
\newcommand{\efspan}{\langle \vec e_{n}, \vec f_{n} \rangle_\mbQ}
\newcommand{\sym}[1]{[\![ #1 ]\!]}
\newcommand{\vcd}{\on{vcd}}
\author{Benjamin Br\"uck} \thanks{BB was supported by the Danish National Research Foundation through the Copenhagen Centre for Geometry and Topology (DNRF151) and by the Deutsche Forschungsgemeinschaft (DFG, German Research Foundation) under Project-ID 427320536 -- SFB 1442, as well as by Germany's Excellence Strategy EXC 2044 – 390685587, Mathematics Münster: Dynamics–Geometry–Structure.}
\address{Institut für Mathematische Logik und Grundlagenforschung, University of Münster, Germany}
\email{benjamin.brueck@uni-muenster.de}
\author{Robin J. Sroka} \thanks{RJS was supported by the European Research Council (ERC grant agreement No.772960) and the Danish National Research Foundation (DNRF92, DNRF151) as a PhD student at the University of Copenhagen, by NSERC Discovery Grant A4000 in connection with a Postdoctoral Fellowship at McMaster University, and by the Deutsche Forschungsgemeinschaft (DFG, German Research Foundation) -- Project-ID 427320536 -- SFB 1442, as well as by Germany’s Excellence Strategy EXC 2044 390685587, Mathematics Münster: Dynamics–Geometry–Structure as a Postdoctoral Research Associate at the University of Münster.}
\address{Department of Mathematics \& Statistics, McMaster University, 
Hamilton, Canada}
\curraddr{Mathematisches Institut, Universität Münster, Germany}
\email{robinjsroka@uni-muenster.de}
\title{Apartment classes of integral symplectic groups}
\begin{document}
	
\begin{abstract}
  In this note we present an alternative proof of a theorem of Gunnells, which states that the Steinberg module of $\Sp{2n}{\mbQ}$ is a cyclic $\Sp{2n}{\mbZ}$-module, generated by integral apartment classes.
\end{abstract}

\maketitle

\section{Introduction}

Consider the vector space $\mbQ^{2n}$ equipped with the standard symplectic 
form $\omega$, i.e.\ the skew-symmetric, non-degenerate bilinear form which on 
the standard basis $\{\vec e_1, \vec f_1,  \dots, \vec e_{n}, \vec f_{n}\}$ 
evaluates as
\begin{align*}
	\omega(\vec e_i, \vec e_j) = \omega(\vec f_i, \vec f_j) = 0 & \text{ for } i,j \in \{1, \dots, n\},\\
	\omega(\vec e_i, \vec f_j) = 0 & \text{ for } i \neq j \in \{1, \dots, n\},\\
	\omega(\vec e_i, \vec f_i) = - \omega(\vec f_i, \vec e_i) = 1 & \text{ for } i \in \{1, \dots, n\}.
\end{align*}
The symplectic group $\Sp{2n}{\mbQ}$ is the group of $\mbQ$-linear 
automorphisms of $\mbQ^{2n}$ that preserve $\omega$. 
Restricting to $\mbZ^{2n}$, we obtain the symplectic module $(\mbZ^{2n}, 
\omega)$ and the integral symplectic group $\Sp{2n}{\mbZ}$. We may use the 
standard basis to identify $\Sp{2n}{\mbQ}$ and $\Sp{2n}{\mbZ}$ with $2n \times 
2n$-matrix groups.

This work concerns \emph{the symplectic Steinberg module} $\St^\omega_n$, an 
important $\Sp{2n}{\mbZ}$-representation (the rational dualizing module of $\Sp{2n}{\mbZ}$ \cite{borelserre1973cornersandarithmeticgroups}) that can be constructed as follows: A subspace $V \subseteq \mbQ^{2n}$ is called isotropic if $\omega|_V$ is zero. The Tits building of type $\mathtt{C}_n$ over 
$\mbQ$ is the poset $T^{\omega}_n$ of all nontrivial proper isotropic subspaces 
$V$ of $(\mbQ^{2n}, \omega)$ ordered by the inclusion of subspaces. This poset 
admits a natural $\Sp{2n}{\mbZ}$-action because symplectic 
matrices map isotropic subspaces to isotropic subspaces. A theorem of 
Solomon--Tits \cite{solomon1969thesteinbergcharacterofafinitegroup} 
implies that $T^\omega_n \simeq \bigvee S^{n-1}$ has the homotopy type of a 
bouquet of $(n-1)$-spheres. The Steinberg module of $\Sp{2n}{\mbQ}$ is the 
$\Sp{2n}{\mbZ}$-module that arises as the reduced top-degree homology of the 
symplectic Tits building, 
\[
\St^\omega_n \coloneqq \tilde{H}_{n-1}(T^\omega_n; \mbZ).
\]

The goal of this work is to give an alternative proof of a theorem of 
Gunnells \cite[4.11.\ Theorem]{gunnells2000symplecticmodularsymbols}, which 
shows that $\St^\omega_n$ is a cyclic $\Sp{2n}{\mbZ}$-module and 
describes an 
explicit set of generators for $\St^\omega_n$.

\begin{theorem}[Gunnells] \label{gunnellstheorem}
	There exists an $\Sp{2n}{\mbZ}$-equivariant surjection
	$$[-]: \mbZ[\Sp{2n}{\mbZ}] \twoheadrightarrow \St^\omega_n$$
	for all $n \geq 1$ such that the set of generators of $\St^\omega_n$ given by $\{[M]: M \in \Sp{2n}{\mbZ}\}$ is equal to the set of integral apartment classes (see \cref{modularsymbolmap}).
\end{theorem}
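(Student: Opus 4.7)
The plan is to define the apartment class map out of the symplectic basis encoded by an integer symplectic matrix, and then establish surjectivity by reducing an arbitrary rational apartment class to a $\mbZ$-combination of integral ones using the high connectivity of an auxiliary complex of partial symplectic frames.

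\textbf{Defining the map.} Given $M \in \Sp{2n}{\mbZ}$, its columns form an integral symplectic basis $\{\vec v_1, \vec w_1, \ldots, \vec v_n, \vec w_n\}$ of $\mbZ^{2n}$. To this data I associate the apartment $A_M \subseteq T^\omega_n$ of type $\mathtt{C}_n$ whose simplices are chains of isotropic subspaces spanned by subsets $\{\vec u_{i_1}, \ldots, \vec u_{i_k}\}$ with $\vec u_{i_j} \in \{\vec v_{i_j}, \vec w_{i_j}\}$ and all indices distinct. This subposet is homeomorphic to $S^{n-1}$, and its fundamental class gives $[M] \in \St^\omega_n = \tilde H_{n-1}(T^\omega_n;\mbZ)$. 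Linearly extending yields an $\Sp{2n}{\mbZ}$-equivariant map $[-] : \mbZ[\Sp{2n}{\mbZ}] \to \St^\omega_n$, and equivariance is built in since $g \cdot A_M = A_{gM}$.

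\textbf{Reducing to integral apartments.} By the Solomon--Tits theorem, $\St^\omega_n$ is generated as an abelian group by the apartment classes $[A]$ associated to \emph{rational} symplectic bases of $\mbQ^{2n}$. The goal is therefore to rewrite each such $[A]$ as a $\mbZ$-linear combination of integral apartment classes $[M]$. I would proceed by induction on a complexity measure counting how far the rational basis is from being integral (e.g.\ the total denominator of the basis vectors with respect to $\mbZ^{2n}$, or the index of the lattice they span in its integral saturation). The base case is trivial, and the induction step requires a ``reduction move'' that rewrites $[A]$ modulo integral apartments as a combination of classes $[A']$ with smaller complexity.

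\textbf{Key technical step.} To produce such a reduction, I would pass to a simplicial complex $\mcU_n$ of partial integral symplectic frames in $\mbZ^{2n}$: a $k$-simplex is an ordered collection of $k+1$ ``symplectic blocks'', each block consisting either of a primitive isotropic vector or of a hyperbolic pair, subject to the constraint that the blocks assemble into a partial symplectic basis. Top-dimensional simplices correspond, up to reordering, to integral apartment classes. The heart of the argument is to show that $\mcU_n$ is at least $(n-1)$-connected. Granted this connectivity, a chain-level argument familiar from the $\SL_n$ case (Ash--Rudolph, Church--Farb--Putman) lets one fill in cycles supported on non-integral apartments by chains of integral frames, whose boundary then expresses $[A]$ as a sum of integral apartment classes, completing the induction.

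The main obstacle is the connectivity bound for $\mcU_n$. I expect to prove this by a Maazen/van der Kallen-style induction on $n$, using a bad-simplex argument to push links of vertices into lower-dimensional copies of the same complex. Compared to the $\SL_n$ case, the symplectic constraint restricts which vectors can be added to a given partial frame, so the combinatorial set-up for the link argument must be arranged carefully to ensure that the symplectic complements still carry the inductive hypothesis.
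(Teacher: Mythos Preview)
Your definition of the apartment class map is essentially the paper's \cref{modularsymbolmap}, so that part is fine. The difficulty is in the rest of the outline, which conflates two genuinely different strategies. The ``complexity reduction'' paragraph is the Ash--Rudolph/Gunnells approach: one produces explicit algebraic moves that rewrite a rational apartment as a $\mbZ$-combination of apartments of lower complexity, and no auxiliary complex is needed. Your ``key technical step'' paragraph then pivots to a Church--Farb--Putman style argument, where one never touches rational apartments at all; instead one factors $[-]$ through the homology of a pair of highly connected complexes and shows each factor surjects. You cannot ``fill in cycles supported on non-integral apartments by chains of integral frames'': non-integral apartments do not live in your complex $\mcU_n$, so there is no cycle there to fill. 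Either approach can be made to work, but you must commit to one; as written, the induction hypothesis and the connectivity statement do not interact.

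If you pursue the second (topological) route, two ingredients are still missing from your sketch that the paper supplies. First, to get an \emph{isomorphism} between $\tilde H_{n-1}$ of your frame complex and $\St^\omega_n$ you need the span map to the Tits building to be $n$-connected; for this the paper augments the complex of isotropic partial frames by $2$-additive simplices (giving $\I^\delta$), using Church--Putman's result that $\mathcal{BA}^m$ is one degree more connected than $\mathcal{B}^m$. Your $\mcU_n$ has no such simplices, and without them the fiber over a rank-$k$ isotropic summand is only $(k-2)$-connected, which is not enough. Second, even granting $(n-1)$-connectivity of the larger complex, surjectivity of $\alpha:\mbZ[\Sp{2n}{\mbZ}]\to H_n(\IA,\I^\delta)$ is not automatic: the paper proves it by an induction on $n$ that uses excision over the set of $\sigma$-edges (a single hyperbolic pair) to decompose $H_n(\IA,\I^\delta)$ as $\bigoplus_\Delta \St^\omega(\Delta^\perp)$, identifying the restriction of $\alpha$ on each summand with the apartment map for $\Sp{2(n-1)}{\mbZ}$. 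This step has no analogue in the $\SL_n$ argument (where the relevant relative group is generated by single top simplices), and it is where the symplectic structure really bites; your proposal does not address it.
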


\cref{gunnellstheorem} is the special case $\mathcal{O} = \mbZ$ of Gunnells' 
result \cite[4.11 Theorem]{gunnells2000symplecticmodularsymbols}, which allows 
any Euclidean ring of integers $\mathcal{O}$ of a number field $K/\mbQ$.

Our primary interest in $\St^\omega_n$ stems from applications 
to the study of the rational cohomology of $\Sp{2n}{\mbZ}$: Using ideas contained in \cite{borelserre1973cornersandarithmeticgroups, churchfarbputman2019integralityinthesteinbergmodule}, Gunnells' 
generating set for $\St^\omega_n$ can be used to prove that the rational 
cohomology of $\Sp{2n}{\mbZ}$ vanishes in its virtual cohomological 
dimension $\vcd_n = n^2$, $$H^{n^2}(\Sp{2n}{\mbZ}; \mbQ) = 0$$
if $n \geq 1$ (see Brück--Patzt--Sroka \cite[Chapter 
5]{sroka2021thesis} or, for a more general version, Brück--Santos Rego--Sroka 
\cite{bruecksantosregosroka2022onthetopdimensionalcohomologyofarithmeticchevalleygroups}).
 In a sequel that is joint work with Peter Patzt \cite{brueckpatztsroka2023inpreparation}, we build on the techniques developed in this note to determine the relations between all integral apartment classes, i.e.\ between the generators of $\St^\omega_n$ appearing in \cref{gunnellstheorem}. This yields a presentation of the symplectic Steinberg module $\St^\omega_n$ for $n \geq 1$. We use this presentation to prove that the rational cohomology of $\Sp{2n}{\mbZ}$ vanishes 
one degree below its virtual cohomological dimension,
$$H^{n^2-1}(\Sp{2n}{\mbZ}; \mbQ) = 0$$
if $n \geq 2$ (see Brück--Patzt--Sroka \cite{brueckpatztsroka2023inpreparation} for $n \geq 3$; and Igusa \cite{igusa1962}, Hain \cite{hain2002} and Hulek-Tommasi \cite{ht2012} for $n \leq 4$).

Our motivation for writing the present note is threefold: Gunnells' 
algorithmic strategy of proof for  \cref{gunnellstheorem} in 
\cite{gunnells2000symplecticmodularsymbols} was inspired by work of 
Ash--Rudolph for special linear groups 
\cite{ashrudolph1979themodularsymbolandcontinuedfractionsinhigherdimensions}. 
Our first aim here is to implement an idea of Putman 
\cite{putman2021privatecommunication} and 
present a new, more geometric argument in the style of recent work 
on special linear groups  
\cite{churchfarbputman2019integralityinthesteinbergmodule, 
churchputman2017thecodimensiononecohomologyofslnz}. In fact, our 
strategy to study integral symplectic groups relies on and uses results 
obtained in \cite{churchfarbputman2019integralityinthesteinbergmodule, 
churchputman2017thecodimensiononecohomologyofslnz}. This idea is also prominent 
in the sequel \cite{brueckpatztsroka2023inpreparation}, which relies on results 
contained in \cite{churchfarbputman2019integralityinthesteinbergmodule, 
churchputman2017thecodimensiononecohomologyofslnz, 
brueckmillerpatztsrokawilson2022onthecodimensiontwocohomologyofslnz}. Our 
second aim is to develop and showcase some of the techniques used in 
\cite{brueckpatztsroka2023inpreparation} at a simpler example. Our approach to 
\cref{gunnellstheorem} requires to 
show that a certain simplicial complex $\IA$ is highly connected. The difficult 
parts of this connectivity calculation have been carried out by Putman in \cite{putman2009aninfinitepresentationofthetorelligroup}. However, Putman 
informed us that \cite{putman2009aninfinitepresentationofthetorelligroup} 
contains small gaps. Our third aim in this article is to explain how 
these can be filled.
 
 \textbf{Outline.} \cref{sec:restricted-building} introduces a new poset, the restricted Tits building 
 $T^\omega_n(W)$, and studies its 
 connectivity properties. This poset is a variant of the symplectic Tits building 
 $T^\omega_n$ defined above. In \cref{sec:putmans-results-and-IArel}, we 
 define the complex $\I^{\sigma, \delta}$, which Putman introduced in 
 \cite{putman2009aninfinitepresentationofthetorelligroup}, and the complex 
 $\IA$, which we use in our proof of Gunnells' theorem. We show that Putman's 
 connectivity results for $\I^{\sigma, \delta}$ imply that $\IA$ is highly 
 connected as well. Furthermore, we explain how one can combine connectivity 
 results obtained by Church--Putman 
 \cite{churchputman2017thecodimensiononecohomologyofslnz} with our results 
 about the restricted Tits building to give an alternative proof of the first 
 steps of Putman's connectivity calculation for $\I^{\sigma, \delta}$. This 
 fixes the gaps in Putman's argument (see \cref{rem:smallgap} and 
 \cref{rem:revisited}). In 
 \cref{sec:symplectic-modular-symbols}, we define the integral apartment 
 class map appearing in Gunnells' theorem. 
 \cref{sec:new-proof-of-gunnells-theorem} 
 contains the new proof of \cref{gunnellstheorem}.

\textbf{Acknowledgments.}
This article is based on Chapter 5 of Sroka's PhD Thesis 
\cite{sroka2021thesis} written at the University of Copenhagen. Essentially,
all results presented here are contained in \cite{sroka2021thesis}. It is a 
pleasure to thank Andrew Putman for posing the question that led to this work 
\cite{amssession2020} and for sharing his idea for a new proof of 
Gunnells’ theorem \cite{putman2021privatecommunication}. We thank Peter 
Patzt for helpful discussions and comments, and the department of the 
University of Copenhagen for the excellent working conditions. RJS would 
like to thank his PhD advisor Nathalie Wahl for many fruitful and clarifying 
conversations about \cite[Chapter 5]{sroka2021thesis}.
We thank the anonymous referee for their careful reading and helpful suggestions.

\section{The restricted Tits building}
\label{sec:restricted-building}

This section introduces and studies a new poset, the restricted Tits building 
$T^\omega_n(W)$. In the next section, we 
use this poset to fix gaps in an argument contained in 
\cite{putman2009aninfinitepresentationofthetorelligroup}. The results for 
$T^\omega_n(W)$ presented here are also used in the sequel to this work 
\cite{brueckpatztsroka2023inpreparation}. We assume that $n \geq 1$ throughout this section. 

Recall that the symplectic Tits building $T_n^\omega$ is the 
poset of nontrivial isotropic subspaces of $\mbQ^{2n}$ ordered by inclusion of 
subspaces. The order complex of this poset, which we also denote by 
$T_n^\omega(\mbQ)$, is an ordered simplicial complex with $k$-simplices given 
by the following set of flags
\[
\{ V_0 \subsetneq \dots \subsetneq V_k : 0 \neq V_i \subsetneq \mbQ^{2n} \text{ 
isotropic subspace} \}.
\]
This complex has dimension $n-1$ and the $i$-th face of a $k$-simplex is 
obtained by omitting the $i$-th isotropic subspace $V_i$ of the flag.

\begin{definition}
We define
\begin{equation*}
	W = \ll \vec e_1, \vec f_1 \ldots, \vec e_{n-1}, \vec f_{n-1}, 
\vec e_{n} \rr_{\mbQ} \subseteq \mbQ^{2n}
\end{equation*}
to be the subspace of $\mbQ^{2n}$ spanned by all standard basis vectors apart from $f_n$.

	We denote by $T^{\omega}_n(W)$ the subposet of the symplectic Tits building $T^{\omega}_n$ consisting of isotropic subspaces $V \in T^\omega_n$ that are contained in $W$.
\end{definition}

Recall from \cite[p.116-117]{quillen1978homotopypropertiesofposets} that a 
poset $P$ is Cohen--Macaulay of dimension $d$ if the associated order complex 
$P$ is $d$-spherical, i.e.~$d$-dimensional and homotopy equivalent to a wedge of $d$-spheres, and the link of each $k$-simplex in $P$ is 
$(d-k-1)$-spherical. The main result of this section is the following theorem.

\begin{theorem} \label{restrictedtitsbuildingcm}
  $T^{\omega}_n(W)$ is a contractible Cohen--Macaulay poset of dimension $n-1$.
\end{theorem}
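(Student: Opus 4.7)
The plan is to prove that $T^\omega_n(W)$ is contractible via a direct retraction, and then upgrade this to the Cohen-Macaulay property of dimension $n-1$ by induction on $n$.

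For \emph{contractibility}, the key observation is that $\vec e_n$ spans the radical of $\omega|_W$: one has $\omega(\vec e_n, \vec e_i) = \omega(\vec e_n, \vec f_i) = 0$ for $i \leq n-1$ and $\omega(\vec e_n, \vec e_n) = 0$. Hence for every isotropic $V \subseteq W$ the subspace $V + \langle \vec e_n \rangle$ is again isotropic and contained in $W$; moreover it is proper in $\mathbb{Q}^{2n}$, because any Lagrangian contained in $W$ already contains $\vec e_n$ (otherwise $V + \langle \vec e_n \rangle$ would be isotropic of dimension $n+1$). The assignment $f \colon V \mapsto V + \langle \vec e_n \rangle$ thus defines a poset endomorphism of $T^\omega_n(W)$ satisfying $V \leq f(V)$ and $\langle \vec e_n \rangle \leq f(V)$ for every $V$. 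The standard argument that poset maps related by natural transformations are homotopic then exhibits the identity of $T^\omega_n(W)$ as homotopic to the constant map at $\langle \vec e_n \rangle$, so $T^\omega_n(W)$ is contractible.

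For the \emph{Cohen-Macaulay property}, I induct on $n$; the base case $n=1$ is trivial since $T^\omega_1(W) = \{\langle \vec e_1 \rangle\}$ is a single point. For the inductive step, $T^\omega_n(W)$ clearly has dimension $n-1$ (witnessed by $\langle \vec e_1 \rangle \subsetneq \langle \vec e_1, \vec e_2 \rangle \subsetneq \cdots \subsetneq \langle \vec e_1, \dots, \vec e_n \rangle$) and is $(n-2)$-connected by the contractibility argument above. Using the standard equivalent characterization of Cohen-Macaulayness via links of vertices, it suffices to show that $\operatorname{link}(V)$ is Cohen-Macaulay of dimension $n-2$ for every vertex $V$. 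Decomposing $\operatorname{link}(V)$ as the join of the lower part $T^\omega_n(W)_{<V}$ and the upper part $T^\omega_n(W)_{>V}$, and using that Cohen-Macaulayness is preserved under joins, I handle the two factors separately. The lower part $T^\omega_n(W)_{<V}$ is the full type-$\mathtt{A}_{\dim V - 1}$ Tits building on $V$ (any subspace of an isotropic space is isotropic), hence Cohen-Macaulay by Solomon-Tits. The upper part splits into two sub-cases: if $\vec e_n \in V$, quotienting by $\langle \vec e_n \rangle$ identifies $T^\omega_n(W)_{>V}$ with the upper link of $V/\langle \vec e_n \rangle$ in the full symplectic Tits building on $W/\langle \vec e_n \rangle \cong \mathbb{Q}^{2n-2}$, which via the usual symplectic reduction is the building $T^\omega_{n - \dim V}$ and hence Cohen-Macaulay by Solomon-Tits; if $\vec e_n \notin V$, the key claim is that $T^\omega_n(W)_{>V}$ is poset-isomorphic to a smaller restricted Tits building of rank $n' = n - \dim V < n$, to which the induction hypothesis applies.

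The hard part of the proof is this last identification. Isotropic subspaces $U$ of $\mathbb{Q}^{2n}$ strictly containing $V$ correspond via $U \mapsto U/V$ to nontrivial isotropic subspaces of the symplectic quotient $V^\perp/V$, and the condition $U \subseteq W$ translates to $U/V \subseteq W' := (W \cap V^\perp)/V$. Using the identity $(W \cap V^\perp)^\perp = W^\perp + V = \langle \vec e_n \rangle + V$ (valid since $\omega$ is non-degenerate), together with $\vec e_n \notin V$, one checks that $W'$ has dimension $2n' - 1$ and that the radical of the induced symplectic form on $W'$ is exactly $\langle \overline{\vec e_n} \rangle$. By Witt's theorem the pair $(V^\perp/V, W')$ is then symplectically isomorphic to the standard pair defining the restricted Tits building of rank $n'$, so the induction hypothesis applies and completes the argument.
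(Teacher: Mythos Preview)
Your argument is correct. The contractibility step is identical to the paper's (the conical contraction $V \mapsto V + \langle \vec e_n\rangle$), and the treatment of lower links as type-$\mathtt{A}$ buildings is the same. The organization of the Cohen--Macaulay verification differs, however. The paper does not induct on $n$; it applies Quillen's characterization \cite[Proposition~8.6]{quillen1978homotopypropertiesofposets} directly and checks that all lower links, intervals, and upper links are spherical. In particular, for the upper link when $\vec e_n \notin Q$ the paper simply reuses the contraction $V \mapsto V + \langle \vec e_n\rangle$ to see that this upper link is contractible---no further identification is needed. Your approach instead identifies that upper link with a restricted Tits building of smaller rank (via the symplectic reduction $V^\perp/V$ and an appeal to Witt), which is what forces you into an induction. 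Both routes work; the paper's is shorter because sphericality of the upper link in the $\vec e_n \notin Q$ case comes for free from contractibility, whereas you prove the stronger statement that this upper link is itself Cohen--Macaulay. In the case $\vec e_n \in Q$ the two arguments are essentially the same identification, the paper realizing it as intersection with $\langle \vec e_n,\vec f_n\rangle^\perp$ and you as the quotient by $\langle \vec e_n\rangle$.
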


For any subspace $H \subseteq \mbQ^{2n}$, let $$H^\perp = 
\{\vec v \in \mbQ^{2n}: \omega(\vec v, \vec h) = 0 \text{ for all } \vec h \in 
H\}$$ denote the symplectic complement of $H$ in $\mbQ^{2n}$. The following two 
observations are the main ingredients of the proof of 
\cref{restrictedtitsbuildingcm}.

\begin{lemma} \label{adding}
  If $V \in T^{\omega}_n(W)$, then $\ll \vec e_{n} \rr_{\mbQ} + V \in T^{\omega}_n(W)$.
\end{lemma}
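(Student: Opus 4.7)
The plan is to verify, for $U := \langle \vec e_n \rangle_\mbQ + V$, the three conditions that define membership in $T^\omega_n(W)$: containment in $W$, nontriviality/properness, and isotropy. The first two are immediate: $U \subseteq W$ because both $\vec e_n$ and $V$ lie in $W$; $U$ is nonzero because $\vec e_n \neq 0$; and $U$ is proper because $U \subseteq W \subsetneq \mbQ^{2n}$. The content of the lemma is therefore the isotropy of $U$.

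For isotropy, the key observation I would isolate is that every vector in $W$ is $\omega$-orthogonal to $\vec e_n$. This follows directly from the defining relations of $\omega$: among the generators $\vec e_1, \vec f_1, \dots, \vec e_{n-1}, \vec f_{n-1}, \vec e_n$ of $W$, the pairing with $\vec e_n$ vanishes in each case (there is no $\vec f_n$ in the generating set, which is precisely the point of choosing $W$ this way). Hence $W \subseteq \langle \vec e_n \rangle_\mbQ^{\perp}$, and in particular $V \subseteq \langle \vec e_n \rangle_\mbQ^{\perp}$.

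Given this, isotropy of $U$ reduces to a three-line bilinearity check: for $\vec v, \vec v' \in V$ and scalars $a, b \in \mbQ$,
\[
\omega(\vec v + a \vec e_n, \vec v' + b \vec e_n) = \omega(\vec v, \vec v') + a \omega(\vec e_n, \vec v') + b\omega(\vec v, \vec e_n) + ab \omega(\vec e_n, \vec e_n),
\]
and each of the four terms vanishes — the first because $V$ is isotropic, the middle two because $V \subseteq \langle \vec e_n \rangle_\mbQ^{\perp}$, and the last because $\omega(\vec e_n, \vec e_n) = 0$.

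There is no real obstacle in this lemma; it is a direct consequence of the specific form of $W$, which was engineered precisely so that adjoining $\vec e_n$ preserves isotropy. The role of the lemma in what follows (i.e.\ in the proof of \cref{restrictedtitsbuildingcm}) is to supply a canonical way to enlarge any isotropic subspace of $W$ to one containing $\vec e_n$, and this is what I would emphasize when stating the argument.
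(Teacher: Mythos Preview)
Your proof is correct and follows essentially the same approach as the paper: the key observation in both is that $W \subseteq \langle \vec e_n \rangle_\mbQ^\perp$, whence $V \subseteq \langle \vec e_n \rangle_\mbQ^\perp$, and isotropy of $\langle \vec e_n \rangle_\mbQ + V$ follows. You have simply spelled out the bilinearity check and the verification of nontriviality and properness in more detail than the paper does.
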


\begin{proof}
	Observe that $W \subseteq \ll \vec e_{n} \rr_{\mbQ}^\perp$, hence $V \subseteq \ll \vec e_{n} \rr_{\mbQ}^\perp$. It follows that $\ll \vec e_{n} \rr_{\mbQ} + V \subseteq W$ is isotropic. Hence, $\ll \vec e_{n} \rr_{\mbQ} + V \in T^{\omega}_n(W)$.
\end{proof}

Given a poset $P$, the upper link $P_{>x}$ and the lower link $P_{<x}$ of an 
element $x \in P$ are the subposets of $P$ containing all elements $y \in P$ 
that satisfy $y > x$ and $y < x$ respectively. The interval $(x,z)$ between two 
elements $x \leq z \in P$ is the subposet of $P$ consisting of all $y \in P$ 
that satisfy $x < y < z$. The next observation concerns certain upper links in 
$T^{\omega}_n(W)$ and is similar to Lemma 4.2 of Sprehn--Wahl 
\cite{sprehnwahl2020homologicalstabilityforclassicalgroups}.

\begin{lemma} \label{intersecting}
  If $\ll \vec e_{n} \rr_{\mbQ} \subseteq Q \in T^{\omega}_n(W)$, then 
  \[
  T^{\omega}_n(W)_{> Q} \cong T^{\omega}(\efspan^\perp)_{> Q \cap \efspan^\perp 
  }.
  \]
  For the case $Q = \ll \vec e_{n} \rr_{\mbQ}$, we set 
  $T^{\omega}(\efspan^\perp)_{> Q \cap \efspan^\perp} \coloneqq 
  T^{\omega}(\efspan^\perp)$.
\end{lemma}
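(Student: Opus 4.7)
My plan is to exhibit an explicit poset isomorphism between the two upper links by constructing mutually inverse order-preserving maps. The key structural fact is that $W = \ll \vec e_n \rr_\mbQ^\perp$ (a vector is orthogonal to $\vec e_n$ if and only if its $\vec f_n$-coefficient vanishes), and since $\vec e_n \notin \efspan^\perp$ this yields the internal direct sum decomposition
\[
W = \ll \vec e_n \rr_\mbQ \oplus \efspan^\perp.
\]

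I would then define $\Phi \colon T^\omega_n(W)_{>Q} \to T^\omega(\efspan^\perp)_{> Q \cap \efspan^\perp}$ by $V \mapsto V \cap \efspan^\perp$, with candidate inverse $\Psi \colon T^\omega(\efspan^\perp)_{> Q \cap \efspan^\perp} \to T^\omega_n(W)_{>Q}$ given by $V' \mapsto \ll \vec e_n \rr_\mbQ + V'$. The map $\Psi$ is straightforward to verify: $\ll \vec e_n \rr_\mbQ + V'$ is isotropic because $\efspan^\perp \subseteq \ll \vec e_n \rr_\mbQ^\perp$ and $V'$ is itself isotropic, it lies in $W$ by the decomposition above, and it strictly contains $Q = \ll \vec e_n \rr_\mbQ + (Q \cap \efspan^\perp)$ since $V' \supsetneq Q \cap \efspan^\perp$.

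For $\Phi$, the only nontrivial point is strict containment of $\Phi(V) = V \cap \efspan^\perp$ over $Q \cap \efspan^\perp$ (which in the boundary case $Q = \ll \vec e_n \rr_\mbQ$ specializes to nontriviality of $\Phi(V)$). Given $\vec v \in V \setminus Q$, the direct sum decomposition writes $\vec v = \alpha \vec e_n + \vec w$ with $\vec w \in \efspan^\perp$; then $\vec w = \vec v - \alpha \vec e_n \in V$ lies in $\Phi(V)$ but outside $Q \cap \efspan^\perp$, since otherwise $\vec e_n \in Q$ would force $\vec v \in Q$. That $\Phi \circ \Psi$ and $\Psi \circ \Phi$ are identities is immediate from the same decomposition, and order-preservation is built in because both maps are intersection with, respectively sum with, a fixed subspace. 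I do not anticipate any genuine obstacle here; the only subtlety is isolating the boundary case $Q = \ll \vec e_n \rr_\mbQ$ in which $Q \cap \efspan^\perp = 0$, which the statement handles by convention.
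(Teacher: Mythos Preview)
Your proposal is correct and follows essentially the same approach as the paper: both construct the explicit mutually inverse poset maps $V \mapsto V \cap \efspan^\perp$ and $V' \mapsto \ll \vec e_n \rr_\mbQ \oplus V'$, relying on the direct sum decomposition $V = \ll \vec e_n \rr_\mbQ \oplus (V \cap \efspan^\perp)$ for $V \in T^\omega_n(W)_{>Q}$. You supply more detail than the paper does (in particular the verification of strict containment for $\Phi$), but the argument is the same.
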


\begin{proof}
	Note that any $V \in T^{\omega}_n(W)_{> Q}$ admits a direct sum decomposition $V = \ll \vec e_{n} \rr_{\mbQ} \oplus (V \cap \efspan^\perp)$. The poset maps $$T^{\omega}_n(W)_{> Q} \to T^\omega(\efspan^\perp)_{> Q \cap \efspan^\perp}: V \mapsto V \cap \efspan^\perp$$ and $$T^{\omega}(\efspan^\perp)_{> Q \cap \efspan^\perp} \to T^{\omega}_n(W)_{> Q}: V \mapsto \ll \vec e_{n} \rr_{\mbQ} \oplus V$$ are therefore inverses of each other.
\end{proof}

\begin{lemma} \label{contractible}
  $T^{\omega}_n(W)$ is contractible.
\end{lemma}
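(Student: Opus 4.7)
The plan is to turn Lemma~\ref{adding} into a standard ``cone to a point'' argument for posets. Concretely, Lemma~\ref{adding} says that the assignment
\[
f \colon T^{\omega}_n(W) \to T^{\omega}_n(W), \quad V \mapsto \ll \vec e_{n} \rr_{\mbQ} + V
\]
is well-defined, and it is visibly order-preserving since $V_1 \subseteq V_2$ implies $\ll \vec e_n \rr_\mbQ + V_1 \subseteq \ll \vec e_n \rr_\mbQ + V_2$. So $f$ is a poset endomorphism of $T^{\omega}_n(W)$.

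Next I would observe that for every $V \in T^{\omega}_n(W)$ one has $V \subseteq f(V)$ and $\ll \vec e_n \rr_\mbQ \subseteq f(V)$. Writing $c$ for the constant poset map $V \mapsto \ll \vec e_n \rr_\mbQ$, this says that the three poset maps $\on{id}$, $f$, and $c$ satisfy $\on{id} \leq f$ and $c \leq f$ pointwise. By the standard fact that pointwise-comparable poset maps induce homotopic maps on the order complex (Quillen's poset lemma, see \cite{quillen1978homotopypropertiesofposets}), we obtain homotopies $\on{id} \simeq f \simeq c$ on the geometric realization of $T^\omega_n(W)$.

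Since $c$ is constant, its realization factors through a point, so $\on{id}$ is null-homotopic and $T^{\omega}_n(W)$ is contractible. There is no real obstacle here: all the work has already been done in Lemma~\ref{adding}, which guarantees that the element $\ll \vec e_n \rr_\mbQ$ can be added to any member of the poset without leaving it. The only thing to check before invoking the poset lemma is that $\ll \vec e_n \rr_\mbQ$ itself lies in $T^\omega_n(W)$, which is immediate because it is a nontrivial isotropic subspace contained in $W$.
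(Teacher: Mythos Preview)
Your proof is correct and follows essentially the same approach as the paper: both use the poset map $f(V) = \ll \vec e_n \rr_\mbQ + V$ together with Quillen's comparison lemma \cite[§1.5]{quillen1978homotopypropertiesofposets}. The only cosmetic difference is that the paper phrases the second step as ``$\im(f)$ has cone point $\ll \vec e_n \rr_\mbQ$'', whereas you write out the homotopy $f \simeq c$ explicitly via $c \leq f$.
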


\begin{proof}
	The poset map $$f: T^{\omega}_n(W) \to T^{\omega}_n(W): V \mapsto \ll \vec 
	e_{n} \rr_{\mbQ} + V$$ is well-defined by \autoref{adding} and satisfies $V \subseteq f(V)$. It follows 
	from \cite[§1.5]{quillen1978homotopypropertiesofposets} that 
	$T^{\omega}_n(W)$ is homotopy equivalent to $\im(f)$ and that $\im(f)$ is 
	contractible using the cone point $\ll \vec e_{n} \rr_{\mbQ}$.
\end{proof}

\begin{lemma}\label{lem:restrictedtitsbuildingcmproperty}
  $T^{\omega}_n(W)$ is a Cohen--Macaulay poset of dimension $n-1$.
\end{lemma}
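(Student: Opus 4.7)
The plan is to verify both Cohen--Macaulay conditions directly. \cref{contractible} already gives contractibility of $T^{\omega}_n(W)$; combined with the existence of the flag $\ll \vec e_{n} \rr_\mbQ \subsetneq \ll \vec e_{n}, \vec e_1 \rr_\mbQ \subsetneq \cdots \subsetneq \ll \vec e_{n}, \vec e_1, \dots, \vec e_{n-1} \rr_\mbQ$ in $T^{\omega}_n(W)$, this exhibits the order complex as $(n-1)$-spherical. It remains to show that the link $\lk(\sigma)$ of every $k$-simplex $\sigma = (V_0 \subsetneq \dots \subsetneq V_k)$ is $(n-k-2)$-spherical.

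We use the standard link decomposition
\[
\lk(\sigma) = T^{\omega}_n(W)_{<V_0} \ast (V_0, V_1) \ast \cdots \ast (V_{k-1}, V_k) \ast T^{\omega}_n(W)_{>V_k},
\]
with all intervals taken in $T^{\omega}_n(W)$. By the join lemma it suffices to show that each factor is spherical of its expected dimension; a quick count (with the $k+1$ joins contributing $k+1$) gives the total $n-k-2$.

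For the lower link and the intervals, any subspace of an isotropic subspace is isotropic and is automatically contained in $W$ (since $V_k \subseteq W$). Hence $T^{\omega}_n(W)_{<V_0}$ is the ordinary Tits building (type $\mathtt{A}$) associated to $V_0$, and each $(V_{i-1}, V_i)$ is the Tits building of $V_i/V_{i-1}$. Solomon--Tits then supplies the required $(\dim V_0 - 2)$- and $(\dim V_i - \dim V_{i-1} - 2)$-sphericity.

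The main obstacle is the upper link $T^{\omega}_n(W)_{>V_k}$, which we handle by case distinction on whether $\ll \vec e_{n} \rr_\mbQ \subseteq V_k$. If it is, \cref{intersecting} identifies this upper link with an upper link in the standard symplectic Tits building $T^\omega(\efspan^\perp)$ of the $2(n-1)$-dimensional symplectic space $\efspan^\perp$; standard theory recognizes the latter as the symplectic Tits building of the symplectic quotient $(V_k\cap\efspan^\perp)^\perp/(V_k\cap\efspan^\perp)$, of rank $n - \dim V_k$, which is $(n-\dim V_k-1)$-spherical by Solomon--Tits. If instead $\ll \vec e_{n} \rr_\mbQ \not\subseteq V_k$, then by \cref{adding} the map $V \mapsto \ll \vec e_{n} \rr_\mbQ + V$ is a well-defined poset endomorphism of $T^{\omega}_n(W)_{>V_k}$ satisfying $V \leq \ll \vec e_{n} \rr_\mbQ + V$, whose image is $T^{\omega}_n(W)_{\geq V_k + \ll \vec e_{n} \rr_\mbQ}$; this image has $V_k + \ll \vec e_{n} \rr_\mbQ$ as a minimum and is therefore contractible. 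Invoking \cite[§1.5]{quillen1978homotopypropertiesofposets} shows $T^{\omega}_n(W)_{>V_k}$ is itself contractible, hence in particular $(n-\dim V_k-1)$-spherical. Everything else reduces to familiar building arguments; the case split in the upper link is the only place where our restriction to $W$ really enters.
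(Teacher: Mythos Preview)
Your proof is correct and follows essentially the same approach as the paper's: lower links and intervals are identified with type $\mathtt{A}$ Tits buildings, while the upper link is handled by the same two-case split (using \cref{intersecting} when $\ll \vec e_n\rr_\mbQ\subseteq V_k$, and the coning map $V\mapsto \ll \vec e_n\rr_\mbQ + V$ otherwise). The only cosmetic difference is that the paper invokes Quillen's characterization \cite[Proposition~8.6]{quillen1978homotopypropertiesofposets} of Cohen--Macaulay posets in terms of lower links, intervals, and upper links, whereas you verify the link condition directly via the join decomposition; the substance is identical.
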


\begin{proof}
	This proof uses the characterization of Cohen--Macaulay 
	posets given in \cite[Proposition 
	8.6.]{quillen1978homotopypropertiesofposets}: Let $Q' \subseteq Q \in 
	T^{\omega}_n(W)$. We need to see that the lower link $T^{\omega}_n(W)_{< 
	Q}$ is $(\dim Q - 
	2)$-spherical, the interval $(Q', Q)$ is $(\dim Q - \dim Q' -2)$-spherical 
	and the upper link $T^{\omega}_n(W)_{> Q}$ is $(n - \dim Q - 1)$-spherical.

	Connectivity of the lower link and the interval: Note that 
	$T^{\omega}_n(W)_{< Q}$ 
	is the poset of nontrivial proper subspaces of $Q$. This is exactly a 
	Tits building $T(Q)$ of type $\mathtt{A}_{\dim(Q)-1}$, which is known 
	to be a Cohen--Macaulay poset of dimension $(\dim Q - 
	2)$ (see \cite{solomon1969thesteinbergcharacterofafinitegroup} and 
	\cite[IV.5 Remark 2]{brown1989buildings}). Therefore, $T^{\omega}_n(W)_{< 
	Q}$ is $(\dim Q - 2)$-spherical and $(Q', Q)$ is $((\dim Q -2) - (\dim Q' - 
	1) - 1) = (\dim Q - \dim Q' - 2)$-spherical.

	Connectivity of the upper link: We consider two cases.

	\begin{enumerate}
	\item \label{it_first_part_upper_link_proof}Assume that $\ll \vec e_{n} \rr_{\mbQ} \not\subseteq Q$. Then $\ll 
	\vec e_{n} \rr_{\mbQ} + Q \in T^{\omega}_n(W)_{> Q}$ is a cone point of the 
	image of the monotone poset map $f: V \mapsto \ll \vec e_{n} \rr_{\mbQ} + 
	V$ on $T^{\omega}_n(W)_{> Q}$. It follows from 
	\cite[§1.5]{quillen1978homotopypropertiesofposets} that $T^{\omega}_n(W)_{> 
	Q}$ is contractible and in particular $(n - \dim Q - 1)$-spherical.
	\item Assume that $\ll \vec e_{n} \rr_{\mbQ} \subseteq Q$. Then \autoref{intersecting} yields the identification $$T^{\omega}_n(W)_{> Q} \cong T^{\omega}(\efspan^\perp)_{> Q \cap \efspan^\perp}.$$
	But $T^{\omega}(\efspan^\perp)$ is Cohen--Macaulay of dimension $(n-2)$ (see \cite{solomon1969thesteinbergcharacterofafinitegroup} and \cite[IV.5 Remark 2]{brown1989buildings}). Therefore, $T^{\omega}_n(W)_{> Q \cap \efspan^\perp}$ is spherical of dimension
	\begin{equation*}
	(n-2) - (\dim (Q \cap \efspan^\perp)-1) - 1 = n - \dim Q - 1.\qedhere
\end{equation*}
	\end{enumerate}
\end{proof}

\section{Putman's connectivity results revisited and the complex
\texorpdfstring{$\IArel$}{IA}}
\label{sec:putmans-results-and-IArel}

This section introduces the simplicial complex $\IA$ that plays a key role 
in our proof of Gunnells' \cref{gunnellstheorem}. The simplicial complex $\IA$ 
contains a subcomplex $\I^{\sigma, \delta} \hookrightarrow \IA$, which has been 
studied by Putman in \cite{putman2009aninfinitepresentationofthetorelligroup}. 
The goal of this section is twofold. After defining simplicial complexes 
related to $\I^{\sigma, \delta}$ and $\IA$, we outline the strategy that 
Putman used in \cite{putman2009aninfinitepresentationofthetorelligroup} to 
prove that $\I^{\sigma, \delta}$ is highly connected. Our first goal is to give 
alternative proofs for the first steps of Putman's argument, filling gaps in \cite{putman2009aninfinitepresentationofthetorelligroup} 
(see \cref{rem:smallgap} and \cref{rem:revisited}). For this, we 
combine the results for the restricted Tits building $T^{\omega}_n(W)$ obtained 
in the last section with connectivity calculations of Church--Putman 
\cite{churchputman2017thecodimensiononecohomologyofslnz}. Our second goal is to 
show that the complex $\IA$ can be constructed from $\I^{\sigma, \delta}$ by 
attaching simplices along highly connected links. As a consequence, Putman's 
connectivity result for $\I^{\sigma, \delta}$ implies that $\IA$ is highly 
connected as well. The high-connectivity of $\IA$ and the link structure of 
$\IA$ are exactly the properties that make the induction argument in the proof 
of Gunnells' theorem in \cref{sec:new-proof-of-gunnells-theorem} work.

\begin{notation}
	Throughout this subsection, we assume that $m,n \in \mbN$ denote natural numbers satisfying $m+n \geq 1$. We 
	consider $\mbZ^{2(m+n)} \subset \mbQ^{2(m+n)}$ equipped with the standard 
	symplectic form $\omega$ and denote its standard symplectic basis by 
	$\{\vec e_1,\vec f_1, \dots , \vec e_{m+n}, \vec f_{m+n}\}$. Given a 
	primitive vector $\vec v \in \mbZ^{2(m+n)}$, we write $v = \ll \vec v 
	\rr_{\mbZ}$ for the rank-1 summand it spans in $\mbZ^{2(m+n)}$. Similarly, 
	given a rank-1 
	summand $v$ of $\mbZ^{2(m+n)}$, we write $\vec v$ for some choice of primitive vector in $v$. Note that there are exactly two such choices, the other one being $- \vec v$.
\end{notation}

\begin{definition} \label{typesofsimplices} 
	Let $\VertexSet{m+n}$ be the set 
	\[
	\VertexSet{m+n} \coloneqq \{v \subseteq \mbZ^{2(m+n)}: v \text{ is a rank-1 summand of } 
	\mbZ^{2(m+n)}\}.
	\]
	A subset $\Delta = \{v_0, \dots, v_k\} 
	\subset \VertexSet{m+n}$ of $k+1$ lines in $\mbZ^{2(m+n)}$ is called
	\begin{itemize}
		\item a \emph{standard} simplex if $\langle \vec v_i :0 \leq i \leq k \rangle_{\mbZ}$ is an isotropic rank-$(k+1)$ summand of $\mbZ^{2(m+n)}$;
		\item a \emph{2-additive} simplex if $\vec v_0 = \pm \vec v_1 \pm \vec v_2$ and $\Delta \setminus \{v_0\}$ is a standard $(k-1)$-simplex;
		\item a \emph{$\sigma$} simplex if $\omega(\vec v_k, \vec v_{k-1}) = \pm 1$, $\omega(\vec v_k, \vec v_i) = 0$ for $0 \leq i \leq k-2$ and $\Delta \setminus \{v_k\}$ is a standard $(k-1)$-simplex;
		\item a \emph{mixed} simplex if $\Delta \setminus \{v_0\}$ is a $\sigma$ simplex, $\Delta \setminus \{v_k\}$ is a 2-additive simplex and $\omega(v_0, v_k) = 0$.
	\end{itemize}
\end{definition}

\begin{definition}
  The simplicial complexes $\I[m+n], \I[m+n]^\delta,\I[m+n]^{\sigma, \delta}$ and $\IA[m+n]$ have $\VertexSet{m+n}$ as their vertex set and
  \begin{itemize}
  \item the simplices of $\I[m+n]$ are all standard;
  \item the simplices of $\I[m+n]^\delta$ are all either standard or 2-additive;
  \item the simplices of $\I[m+n]^{\sigma, \delta}$ are all either standard, 2-additive or $\sigma$;
  \item the simplices of $\IA[m+n]$ are all either standard, 2-additive, $\sigma$ or mixed.
  \end{itemize}
\end{definition}

\begin{definition}
	\label{def_linkhat}
	Let $X_{m+n}$ denote the complex $\I[m+n], \Idel[m+n], \Isigdel[m+n]$ or 
	$\IA[m+n]$. We define $X_n^m$ to be the full subcomplex of 
	$\Link_{X_{m+n}}(\ls e_1, \ldots, e_m \rs)$ on the vertex set of lines $v 
	\in \Link_{X_{m+n}}(\ls e_1, \ldots, e_m \rs)$ satisfying the 
	following:
	\begin{enumerate}
		\item $\vec v \notin \langle \vec e_1, \dots, \vec e_m \rangle_{\mbZ}$.
		\item For $1 \leq i \leq m$, we have $\omega(\vec e_i , \vec v) = 0$, 
		i.e.~there are no $\sigma$ edges between $v$ and the vertices of $\ls 
		e_1, \ldots, e_m \rs$.
	\end{enumerate}
\end{definition}

\begin{definition}
	\label{def:idelrelw}
  Let $W = \langle \vec e_1, \vec f_1, \dots, \vec e_{m+n-1}, \vec f_{m+n-1}, \vec e_{m+n} \rangle_\mbQ \subseteq \mbQ^{2(m+n)}$ be as in 
  \cref{sec:restricted-building}. We write $\Irel(W)$ and $\Idelrel(W)$ for the 
  full subcomplexes of $\Irel$ and $\Idelrel$, respectively, on the set of 
  vertices contained in $W$.
\end{definition}

The complexes $\Irel(W), \Irel, \Idelrel, \Idelrel(W)$ and $\Isigdelrel$ have 
been defined and studied by Putman 
\cite[Section 6]{putman2009aninfinitepresentationofthetorelligroup}. The next theorem 
lists the five steps of Putman's proof that $\Isigdelrel$ is spherical.

\begin{theorem}[Putman, 
{\cite[Proposition 6.13 and 
6.11]{putman2009aninfinitepresentationofthetorelligroup}}] 
\label{isigmadelta-connectivity}
  Let $m \geq 0$ and $n \geq 1$, then:
  \begin{enumerate}
  \item \label{it_conn_IrelW} $\Irel(W)$ is $(n-2)$-connected.
  \item \label{it_conn_Irel} $\Irel$ is $(n-2)$-connected.
  \item \label{it_conn_IdelrelW} $\Idelrel(W)$ is $(n-1)$-connected.
  \item \label{it_conn_incl}$\Idelrel \hookrightarrow \Isigdelrel$ is the zero 
  map on $\pi_k$ for $0 \leq k \leq n-1$.
  \item \label{it_conn_isigdelrel} $\Isigdelrel$ is $n$-dimensional and 
  $(n-1)$-connected.\footnote{\cite[Theorem 6.11]{putman2009aninfinitepresentationofthetorelligroup} states this result only for $m = 0$. Its proof uses \cref{it_conn_Irel} and \cref{it_conn_incl} for $m = 0$. For $m > 0$, the same argument works if one uses \cref{it_conn_Irel} and \cref{it_conn_incl} for $m > 0$.}
  \end{enumerate}
\end{theorem}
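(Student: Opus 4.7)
My plan is to prove the five statements sequentially, combining the section-introduction strategy for parts (1)--(3) with Putman's original arguments for (4) and (5).

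For (1), I would define the natural map $\Irel(W) \to T^\omega_n(W)$ sending a standard simplex $\{v_0, \ldots, v_k\}$ to the flag of isotropic summands it spans, after quotienting by $\langle \vec e_1, \ldots, \vec e_m\rangle_\mbZ$ to pass from $\mbQ^{2(m+n)}$ to a symplectic space $\mbQ^{2n}$. By \cref{restrictedtitsbuildingcm}, the target is Cohen--Macaulay of dimension $n-1$ and hence $(n-2)$-connected. A Quillen Theorem A argument then reduces the task to checking that the fibers over each flag of isotropic summands are sufficiently connected; I would identify these fibers as (joins of) complexes of partial bases of the summands involved and invoke the Church--Putman connectivity estimates for partial-basis complexes. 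Part (2) follows from the same recipe, using the classical symplectic Tits building $T^\omega_n$, which is $(n-2)$-connected by Solomon--Tits, in place of its restricted version.

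For (3), I would run a bad-simplex argument in the style of Church--Farb--Putman: starting from (1), push a given sphere $S^k \to \Idelrel(W)$ with $k \leq n-1$ into the standard subcomplex $\Irel(W)$ by replacing the stars of 2-additive simplices with fills in their links. The link of a 2-additive simplex in $\Idelrel(W)$ decomposes as a join involving a complex of the same form on fewer generators, so an induction on $n$ based on (1) closes the argument. For (4), I would follow Putman's coning construction: any vertex $v$ in $\Idelrel$ admits a symplectic partner $v'$ with $\omega(\vec v, \vec v') = \pm 1$, and the resulting $\sigma$-edges provide the data required to null-homotope a given $k$-sphere by coning. For (5), the dimension assertion is immediate from \cref{typesofsimplices}, and $(n-1)$-connectivity follows by pushing a sphere in $\Isigdelrel$ into $\Idelrel$ via a bad-simplex argument that resolves $\sigma$-simplices in their links, then nulling it using (4).

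The main obstacle I expect is the fiber analysis in parts (1) and (2): one must identify the fiber over each flag in the Tits building as a specific join of partial-basis complexes, and the numerical bound on its connectivity must be sharp enough to realize the $(n-2)$-estimate on the source; any slack in the Church--Putman input would break the chain. A secondary difficulty is the link analysis in (3), where the induction on $n$ depends on the link of a 2-additive simplex in $\Idelrel(W)$ genuinely splitting off a complex of the same form with rank reduced by the correct amount, and the parallel splitting statement needed in the bad-simplex argument of (5).
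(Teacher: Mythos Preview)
Your plan for parts (1) and (2) is essentially the paper's approach, though your description is imprecise in a way that matters. Quotienting $\mbQ^{2(m+n)}$ by $\langle \vec e_1,\ldots,\vec e_m\rangle$ does \emph{not} produce a symplectic space of dimension $2n$; the paper instead maps the simplex poset of $\Irel(W)$ (resp.\ $\Irel$) to the \emph{upper link} $T^{\omega,m}_n(W) = T^\omega_{m+n}(W)_{>\langle \vec e_1,\ldots,\vec e_m\rangle_\mbQ}$ (resp.\ $T^{\omega,m}_n$) via $\Delta \mapsto \langle \vec e_1,\ldots,\vec e_m\rangle_\mbQ \oplus \langle \Delta\rangle_\mbQ$, and identifies each fiber $f_{\leq V}$ with a single partial-basis complex $\mathcal{B}^m(V\cap\mbZ^{2(m+n)})$ (not a join). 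It then applies Quillen's \cite[Corollary~9.7]{quillen1978homotopypropertiesofposets} rather than Theorem~A. These are repairable details; the skeleton is right. Your treatment of (4) and (5) by deferring to Putman matches what the paper does.

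Part (3), however, has a genuine gap. Your bad-simplex plan pushes an $(n-1)$-sphere from $\Idelrel(W)$ into $\Irel(W)$ and then appeals to (1). But (1) only gives that $\Irel(W)$ is $(n-2)$-connected, so an $(n-1)$-sphere landing there need not bound; you would recover at best the connectivity of $\Irel(W)$, not one degree more. The extra degree is the whole content of (3), and it does not come from resolving 2-additive simplices alone. The paper's argument is structurally different and explains where the extra degree comes from: it uses the \emph{same} map-to-building strategy as in (1), now from $P(\Idelrel(W))$ to $T^{\omega,m}_n(W)$, and applies the van der Kallen--Looijenga criterion (\cref{vanderkallenlooijenga}). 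Two upgraded inputs do the work: the fibers are now $\mathcal{BA}^m(V\cap\mbZ^{2(m+n)})$, which by Church--Putman are $(\dim V - m - 1)$-connected (one better than $\mathcal{B}^m$), and the target $T^{\omega,m}_n(W)$ is \emph{contractible} (\cref{cor:properties-of-restricted-tits-building}), not merely $(n-2)$-connected. These two improvements together make the map $n$-connected, yielding $(n-1)$-connectivity of $\Idelrel(W)$. Your proposal uses neither the $\mathcal{BA}$ connectivity nor the contractibility of the restricted building, so as written it cannot reach the required bound.
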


Putman made us aware that the proof of 
\cite[Proposition 6.13]{putman2009aninfinitepresentationofthetorelligroup} 
contains some small gaps. They occur in the
proof of \cref{it_conn_IrelW} and \cref{it_conn_IdelrelW} 
of \cref{isigmadelta-connectivity}, and are explained in the next remark.

\begin{remark}
\label{rem:smallgap}
\cite[Proofs of Proposition 6.13.1 and 
6.13.3]{putman2009aninfinitepresentationofthetorelligroup} assert -- 
without proof -- that certain isomorphisms of simplicial complexes exist, 
but it is seems unclear why this would be the case. Using the notation of  
\cite{putman2009aninfinitepresentationofthetorelligroup}, the claims are as follows.
\begin{enumerate}
	\item \cite[Page 632. Proof of Proposition 6.13, first conclusion, third 
	paragraph.]{putman2009aninfinitepresentationofthetorelligroup} asserts that 
	$\on{link}_{\mathcal{L}^{\Delta^k, W}(g)}(\phi(\Delta')) \cong 
	\mathcal{L}^{k+m', W}(g)$.
	\item \cite[Page 634. Proof of Proposition 6.13, third conclusion. Step 1.]{putman2009aninfinitepresentationofthetorelligroup} asserts that 
	$\on{link}_{\mathcal{L}_{\delta}^{\Delta^k, W}(g)}(\phi(t)) \cong 
	\mathcal{L}^{k+(m'-1), W}(g)$. \cite[Page 634. Proof of Proposition 
	6.13, third conclusion. Step 2, Case 
	2.]{putman2009aninfinitepresentationofthetorelligroup} asserts an  
	isomorphism
	$\mathcal{L}^{\Delta^k \cup \{\langle v \rangle\}, W}(g) \cong 
	\mathcal{L}^{k+1, W}(g)$. \cite[Page 634. Proof of Proposition 6.13, third 
	conclusion. Step 3.]{putman2009aninfinitepresentationofthetorelligroup} 
	asserts that
	$\mathcal{L}^{\Delta^k \cup \{\phi(x)\}, W}_\delta(g) \cong 
	\mathcal{L}^{k+1, W}_\delta(g)$.
\end{enumerate}

We use the first claim to illustrate why these 
assertions are difficult to verify. 
Translated to the notation of the present note, it is as follows:
\begin{enumerate}
\item Let $0 \leq k \leq n-2$ and $\Delta$ a $k$-simplex  in 
$\Irel(W)$. Then 
\[
\Link_{\Irel(W)}(\Delta) \cong \Irel[n-k-1][m+k+1](W).
\]
\end{enumerate}
Let 
$m = 0$, $n \geq 2$ and consider the $0$-simplex $\Delta = e_{n} \in \I(W)$. Then the claim asserts that
\begin{equation*}
	\Link_{\I(W)}(e_{n}) \cong \Link_{\I(W)}(e_{1}).
\end{equation*} 
For $\I$, i.e.\ if the vertex set has \emph{not} been restricted using $W$, it is indeed easy to see that there is an isomorphims $\Link_{\I}(e_{n}) \cong \Link_{\I}(e_{1})$.
However, while there is an equality $\Link_{\I(W)}(e_{n}) = \Link_{\I}(e_{n})$, the inclusion $\Link_{\I(W)}(e_{1}) \subsetneq \Link_{\I}(e_{1})$ is strict because e.g.\ the vertex $f_n$ is not contained in $\Link_{\I(W)}(e_1)$.
Therefore the first assertion about links in $\I(W)$ has the following consequence:
\begin{equation*}
	\Link_{\I}(e_{1}) \cong \Link_{\I}(e_{n}) = \Link_{\I(W)}(e_{n})   \cong \Link_{\I(W)}(e_{1}) \subsetneq \Link_{\I}(e_{1}),
\end{equation*} 
i.e.\ it identifies $\Link_{\I}(e_1)$ with a proper subcomplex of itself. It is not clear why such an identification would exists. Similar issues arise if one tries to verify the other claims.
\end{remark}

\begin{remark}
\label{rem:revisited}
	The proofs of \cite[Proposition 6.13.2 and 6.13.4]{putman2009aninfinitepresentationofthetorelligroup}, i.e.\ 
	\cref{it_conn_Irel} and \cref{it_conn_incl} of 
	\cref{isigmadelta-connectivity}, use
	identifications which are analogous to the assertions 
	described in \cref{rem:smallgap}. The important difference is that in 
	\cite[Proposition 6.13.2 and 6.13.4]{putman2009aninfinitepresentationofthetorelligroup} these are 
	identifications of complexes whose vertex set has \emph{not} been 
	restricted using $W_{m+n} = \langle \vec e_1, \vec f_1, \dots, \vec 
	e_{m+n-1}, \vec f_{m+n-1}, \vec e_{m+n} \rangle_\mbQ \subseteq 
	\mbQ^{2(m+n)}$. Hence, these identifications can easily be verified. 
	The proofs of \cite[Proposition 6.13.2 and 
	6.13.4]{putman2009aninfinitepresentationofthetorelligroup} furthermore rely on 
	\cite[Proposition 6.13.1 and 
	6.13.3]{putman2009aninfinitepresentationofthetorelligroup}. Since we provide 
	alternative arguments for these two statements, the proofs of 
	\cite[Proposition 
	6.13.2 and 6.13.4]{putman2009aninfinitepresentationofthetorelligroup} are 
	unaffected by the discussion in \cref{rem:smallgap}. Similarly, the proof 
	of \cite[Proposition 	
	6.11]{putman2009aninfinitepresentationofthetorelligroup}, i.e.\ 
	\cref{it_conn_isigdelrel} in \cref{isigmadelta-connectivity}, is 
	not affected.
\end{remark}

We now start working towards an alternative proof of \cite[6.13.1 and 6.13.3]{putman2009aninfinitepresentationofthetorelligroup}, i.e.\ 
\cref{it_conn_IrelW} and \cref{it_conn_IdelrelW} of \cref{isigmadelta-connectivity}, using the restricted 
Tits building introduced in the previous subsection and connectivity 
calculations obtained by Church--Putman 
\cite{churchputman2017thecodimensiononecohomologyofslnz}. This fixes the gaps in
\cite{putman2009aninfinitepresentationofthetorelligroup} 
outlined in \cref{rem:smallgap}.

\begin{definition}
Let $W_{m+n} = \langle \vec e_1, \vec f_1, \dots, \vec e_{m+n-1}, \vec f_{m+n-1}, \vec e_{m+n} \rangle_\mbQ \subseteq \mbQ^{2(m+n)}$
be as in \cref{sec:restricted-building}.
We write
\begin{equation*}
T^{\omega,m}_n \coloneqq (T^{\omega}_{m+n})_{> \ll \vec e_1, \dots, \vec e_m 
\rr_{\mbQ}} 
\text{ and }
T^{\omega, m}_n(W) \coloneqq T(W_{m+n})_{> \ll \vec e_1, \dots, \vec e_m \rr_\mbQ}
\end{equation*}
for the upper links of the isotropic subspace $\ll \vec e_1, \dots, \vec e_m \rr_{\mbQ}$ in $T^\omega_{m+n}$ and  $T^\omega_{m+n}(W)$, respectively.
\end{definition}

\begin{lemma}
	\label{cor:properties-of-restricted-tits-building}
	Let $m \geq 0$ and $n \geq 1$. $T^{\omega,m}_n$ and $T^{\omega, m}_n(W)$ are Cohen--Macaulay posets of dimension $(n-1)$. Furthermore, $T^{\omega, m}_n(W)$ is contractible.
\end{lemma}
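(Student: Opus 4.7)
The plan is to derive both statements from the corresponding properties of the ambient posets $T^\omega_{m+n}$ and $T^\omega_{m+n}(W)$. Write $E \coloneqq \ll \vec e_1, \ldots, \vec e_m \rr_{\mbQ}$; the posets $T^{\omega,m}_n$ and $T^{\omega,m}_n(W)$ are by definition the upper links of the isotropic subspace $E$ in $T^\omega_{m+n}$ and $T^\omega_{m+n}(W)$, respectively. By Solomon--Tits (see \cite[IV.5 Remark 2]{brown1989buildings}), $T^\omega_{m+n}$ is Cohen--Macaulay of dimension $m+n-1$, and by \cref{restrictedtitsbuildingcm}, $T^\omega_{m+n}(W)$ is Cohen--Macaulay of dimension $m+n-1$ and contractible.

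For the Cohen--Macaulay statements, I would apply the general principle that the upper link of an element in a Cohen--Macaulay poset is itself Cohen--Macaulay, with the expected dimension shift. The verification closely mirrors the proof of \cref{lem:restrictedtitsbuildingcmproperty}: applying the criterion of \cite[Proposition 8.6]{quillen1978homotopypropertiesofposets} reduces to the observation that, for any $V > E$, the lower link of $V$ in $(T^\omega_{m+n}(W))_{>E}$ is precisely the interval $(E,V)$ in $T^\omega_{m+n}(W)$, while the upper link and intervals at $V$ agree with those in the ambient poset. Tracking dimensions, the lower link $(T^\omega_{m+n}(W))_{<E}$ has dimension $m-2$, so the upper link $(T^\omega_{m+n}(W))_{>E}$ has dimension $(m+n-1) - (m-2) - 2 = n-1$, with all sphericality requirements inherited from the Cohen--Macaulay property of $T^\omega_{m+n}(W)$. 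The same argument applied to $T^\omega_{m+n}$ handles $T^{\omega,m}_n$.

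For the contractibility of $T^{\omega,m}_n(W)$, I would adapt the proof of \cref{contractible} essentially verbatim. The map $f \colon T^{\omega,m}_n(W) \to T^{\omega,m}_n(W)$ defined by $V \mapsto \ll \vec e_{m+n} \rr_{\mbQ} + V$ is well defined: since $W \subseteq \ll \vec e_{m+n}\rr_{\mbQ}^\perp$, as observed in the proof of \cref{adding}, the sum is isotropic and contained in $W$, and it strictly contains $E$ because $V$ does. As $V \leq f(V)$, \cite[\S1.5]{quillen1978homotopypropertiesofposets} shows that $T^{\omega,m}_n(W)$ is homotopy equivalent to the image of $f$. Every element of this image contains both $\vec e_{m+n}$ and $E$, hence contains $E + \ll \vec e_{m+n}\rr_{\mbQ}$, which itself lies in the image; this element is therefore a cone point, and the image, hence $T^{\omega,m}_n(W)$, is contractible. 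The main obstacle here is not conceptual but one of careful bookkeeping, namely verifying that the Cohen--Macaulay dimensions transfer correctly under passage to upper links; once the convention that $E$ has ``rank'' $m-1$ is fixed, the arithmetic is routine.
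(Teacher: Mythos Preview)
Your proposal is correct and follows essentially the same approach as the paper: both deduce the Cohen--Macaulay property from the ambient results (Solomon--Tits and \cref{restrictedtitsbuildingcm}) via Quillen's \cite[Proposition 8.6]{quillen1978homotopypropertiesofposets}, and both obtain contractibility by applying the monotone map $V \mapsto \ll \vec e_{m+n}\rr_\mbQ + V$ and the cone point $E + \ll \vec e_{m+n}\rr_\mbQ$ (the paper simply cites \cref{it_first_part_upper_link_proof} of the proof of \cref{lem:restrictedtitsbuildingcmproperty}, which is exactly the argument you reconstruct). Your write-up merely makes the dimension bookkeeping explicit where the paper leaves it implicit.
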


\begin{proof}
	For the upper links in the symplectic Tits building $T^{\omega,m}_n$, 
	this follows from the Solomon--Tits Theorem (see 
	\cite{solomon1969thesteinbergcharacterofafinitegroup} and \cite[IV.5 Remark 
	2]{brown1989buildings}) and \cite[Proposition 
	8.6]{quillen1978homotopypropertiesofposets}. For the upper links in the 
	restricted Tits building $T^{\omega,m}_n(W)$, it follows from 
	\cref{restrictedtitsbuildingcm} and \cite[Proposition 
	8.6]{quillen1978homotopypropertiesofposets}. The contractibility of 
	$T^{\omega, m}_n(W) = T^{\omega}_{m+n}(W)_{> \ll \vec e_1, \dots, \vec e_m 
	\rr_\mbQ}$ follows from \cref{it_first_part_upper_link_proof} in the 
	proof of \cref{lem:restrictedtitsbuildingcmproperty} because $\ll \vec 
	e_{m+n} \rr_{\mbQ} \not\subseteq \ll \vec e_1, \dots, \vec e_m \rr_{\mbQ}$.
\end{proof}

\begin{definition} \label{bandba}
  Given an isotropic subspace $V \in T^{\omega, m}_n$, we obtain an isotropic summand $V \cap \mbZ^{2(m+n)}$ of $\mbZ^{2(m+n)}$ (see e.g.\ \cite[Lemma 2.4]{churchputman2017thecodimensiononecohomologyofslnz}) properly containing $\ll \vec e_1, \dots, \vec e_m \rr_\mbZ$.
  \begin{enumerate}
  \item Let $\mathcal{B}^m(V \cap \mbZ^{2(m+n)})$ be the full subcomplex of 
  $\Irel$ on the vertices satisfying $v \subseteq V \cap \mbZ^{2(m+n)}$.
  \item Let $\mathcal{BA}^m(V \cap \mbZ^{2(m+n)})$ be the full subcomplex of 
  $\Idelrel$ on the vertices satisfying $v \subseteq V \cap \mbZ^{2(m+n)}$.
  \end{enumerate}
\end{definition}

Let $\mbZ^{m+n} = \ll e_1, \dots, e_{m+n} \rr_{\mbQ} \cap \mbZ^{2(m+n)}$. 
By results of Maazen \cite{maazen1979thesis} and Church--Putman \cite{churchputman2017thecodimensiononecohomologyofslnz},
the complex $\mathcal{B}^m(\mbZ^{m+n})$ is Cohen--Macaulay of dimension $(n-1)$. The 
connectivity properties of $\mathcal{BA}^m(\mbZ^{m+n})$ have also been studied by Church--Putman \cite{churchputman2017thecodimensiononecohomologyofslnz}. We summarize the results contained in \cite{churchputman2017thecodimensiononecohomologyofslnz} in the following theorem.

\begin{theorem}[{\cite[Theorem 4.2 and Theorem C]{churchputman2017thecodimensiononecohomologyofslnz}}] \label{bbaconnectivity} Let $m \geq 0$ and $n \geq 1$.
  \begin{enumerate}
  \item \label{it_connectivity_Bm} $\mathcal{B}^m(\mbZ^{m+n})$ is $(n-2)$-connected and Cohen--Macaulay of dimension $(n-1)$.
  \item $\mathcal{BA}^m(\mbZ^{m+n})$ is $(n-1)$-connected.
  \end{enumerate}
\end{theorem}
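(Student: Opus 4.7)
The plan is to adapt the inductive strategy of Church--Putman \cite{churchputman2017thecodimensiononecohomologyofslnz}. For (1), I would induct on $n$, with trivial base cases. For the inductive step, Quillen's criterion \cite[Proposition 8.6]{quillen1978homotopypropertiesofposets} splits the Cohen--Macaulay property into two checks: that $\mathcal{B}^m(\mbZ^{m+n})$ is itself $(n-2)$-connected, and that the link of every $k$-simplex is $(n-k-2)$-spherical. The link check reduces directly to the inductive hypothesis, since the link of a standard simplex $\{v_0, \ldots, v_k\}$ can be identified with $\mathcal{B}^{m+k+1}(\mbZ^{m+n})$: by the definition of a standard simplex, $\{\vec e_1, \ldots, \vec e_m, \vec v_0, \ldots, \vec v_k\}$ extends to a basis of $\mbZ^{m+n}$, so there is an automorphism of $\mbZ^{m+n}$ sending this partial basis to $\{\vec e_1, \ldots, \vec e_{m+k+1}\}$, inducing the desired isomorphism of complexes.

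The substantive part is the $(n-2)$-connectivity. For this, I would use the classical Maazen-type \emph{bad simplex} argument: given a simplicial map $\phi\colon S^k \to \mathcal{B}^m(\mbZ^{m+n})$ with $k \leq n-2$, define a complexity on vertices using the absolute value of the $\vec e_{m+n}$-coefficient, with lexicographic tie-breaking via the Euclidean algorithm in the remaining coordinates. One then locates a simplex of $\phi$ of maximal complexity and performs a local surgery that replaces it by one of strictly smaller complexity, using that the link of the relevant face is highly connected by the inductive Cohen--Macaulay hypothesis. Iterating pushes $\phi$ into the contractible star of the vertex $e_{m+n}$, producing the desired filling.

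For (2), I would bootstrap from (1) by an analogous surgery adapted to 2-additive simplices. Given $\phi\colon S^k \to \mathcal{BA}^m(\mbZ^{m+n})$ with $k \leq n-1$, the strategy is to homotope $\phi$ off of the 2-additive simplices one by one, replacing each maximal 2-additive simplex $\{v_0, v_1, \ldots, v_k\}$ with $\vec v_0 = \pm \vec v_1 \pm \vec v_2$ by a chain of standard simplices supplied by its link, whose high connectivity follows from part (1) applied to a smaller parameter $(m',n')$.

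The main obstacle in both parts is the bookkeeping of the complexity function in the surgery step: one must verify that every replacement strictly decreases complexity and that the replacement simplex lies in the correct link, so that the iteration terminates. This explicit Euclidean-algorithm computation is the technical heart of \cite{churchputman2017thecodimensiononecohomologyofslnz}, and I would simply cite their arguments rather than reproduce the bookkeeping.
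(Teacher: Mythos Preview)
The paper does not prove this theorem: it is quoted verbatim as a black-box input from Maazen \cite{maazen1979thesis} and Church--Putman \cite{churchputman2017thecodimensiononecohomologyofslnz}, with no argument supplied. So there is nothing in the paper to compare against; your proposal is really a sketch of what happens inside the cited references.

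Your outline for part (1) is a fair summary of the Maazen/Church--Putman argument and would go through.

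Your outline for part (2), however, has a genuine gap. You propose to take $\phi\colon S^{k}\to \mathcal{BA}^m(\mbZ^{m+n})$ with $k\le n-1$ and homotope it \emph{off} the 2-additive simplices using that their links are highly connected by part (1). A bad-simplex link argument of this shape only shows that the inclusion $\mathcal{B}^m(\mbZ^{m+n})\hookrightarrow\mathcal{BA}^m(\mbZ^{m+n})$ is highly connected; once you have pushed $\phi$ into $\mathcal{B}^m(\mbZ^{m+n})$ you are stuck, because by part (1) that complex is only $(n-2)$-connected, and an $(n-1)$-sphere need not bound there. So your procedure cannot produce the extra degree of connectivity that is the entire content of the statement.

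In Church--Putman the logic runs in the opposite direction: the 2-additive simplices are not obstacles to be excised but the mechanism that makes a stronger retraction possible. Roughly, when a vertex $v$ of maximal complexity is located in the image of $\phi$, the Euclidean step replaces $\vec v$ by $\vec v'=\vec v - c\,\vec w$ for a neighbouring vertex $w$, and it is precisely the 2-additive simplex on $\{v,v',w\}$ that carries the homotopy from $v$ to $v'$ across the star. This extra move is what lets the retraction terminate one dimension higher than in $\mathcal{B}^m$. Your sketch inverts the role of the 2-additive simplices and therefore does not reach $(n-1)$-connectivity.
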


The following two results allow us to relate the complexes introduced above. 
The first is a result of Quillen \cite{quillen1978homotopypropertiesofposets}. 
For this, we recall that the height $h(y)$ of an element $y$ in a poset $P$ is 
the length $l$ of the longest chain of the form $y_0 < \dots < y_l = y$ in $P$. 
If no such $l$ exists, we put $h(y) = \infty$.

\begin{theorem}[{\cite[Corollary 9.7]{quillen1978homotopypropertiesofposets}}]
	\label{quillen}
	Let $f: X \to Y$ be a poset map which is strictly increasing (if $x < x'$, 
	then $f(x) < f(x')$). Assume that $Y$ is Cohen--Macaulay of dimension $n$ 
	and that the poset fibers $f_{\leq y} = \{x \in X : f(x) \leq y\}$ are 
	Cohen--Macaulay of dimension $h(y)$ for all $y \in Y$. Then $X$ is 
	Cohen--Macaulay of dimension $n$.
\end{theorem}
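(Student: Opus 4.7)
The plan is to verify separately the two defining conditions of the Cohen--Macaulay property for $X$: namely that $X$ itself is $n$-spherical, and that for every chain $\sigma = (x_0 < \cdots < x_k)$ in $X$ the link $\Link_X(\sigma)$ is $(n-k-1)$-spherical. Since $f$ is strictly increasing, a chain of length $k$ in $X$ pushes to a chain of length $k$ in $Y$, so $\dim X \leq \dim Y = n$ from the outset, and only the connectivity bound requires work.

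For the $n$-sphericity of $X$, I would apply a Quillen-style fiber argument to $f \colon X \to Y$. The input is that each $f_{\leq y}$ is $(h(y)-1)$-connected and $h(y)$-dimensional, so in the Leray-type spectral sequence
\[
E^2_{p,q} = \tilde{H}_p\bigl(Y;\, \tilde{H}_{q}(f_{\leq \bullet})\bigr) \;\Longrightarrow\; \tilde{H}_{p+q}(X)
\]
the coefficient system is supported in the single row $q = h(y)-1$ over the height-$h(y)$ stratum of $Y$. The Cohen--Macaulay structure of $Y$, filtered by height, shows that the contributions from the stratum of height $h$ vanish below total degree $n$, because the $(n-h-1)$-sphericity of the upper links in $Y$ exactly compensates the $(h-1)$-connectivity gap from the fiber. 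Combined with $\dim X \leq n$, this forces $X$ to be $n$-spherical.

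For the link condition, I would use the standard join decomposition
\[
\Link_X(\sigma) \;\cong\; X_{<x_0} \,\ast\, (x_0,x_1) \,\ast\, \cdots \,\ast\, (x_{k-1},x_k) \,\ast\, X_{>x_k},
\]
and show that each join factor is Cohen--Macaulay of the expected dimension by restricting $f$ and invoking the theorem inductively. For instance, $f$ restricts to a strictly increasing poset map $X_{>x_k} \to Y_{\geq f(x_k)}$ with target Cohen--Macaulay of dimension $n - \dim f(x_k) - 1$ by the CM hypothesis on $Y$, and with fibers that are upper intervals inside the original fibers, hence themselves CM of the correct height by the CM property of $f_{\leq y}$. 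The analogous analysis for intervals and lower links is symmetric, and the join-connectivity formula then assembles these into the required $(n-k-1)$-sphericity.

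The main obstacle is the first step: one has to carefully organize the local coefficient system on $Y$ so that the height stratification interacts correctly with the Cohen--Macaulay vanishing on both sides of the spectral sequence. A cleaner equivalent route, in the spirit of Quillen, is to exhibit $X$ as the homotopy colimit over $Y$ of the diagram $y \mapsto f_{\leq y}$ via the nerve of the bisimplicial set $\{(x_0 < \cdots < x_p,\, y_0 < \cdots < y_q) : f(x_p) \leq y_0\}$, and then run the two edge-spectral-sequences; the axioms on $Y$ and on the fibers make one of these collapse and identify the colimit with $X$ up to homotopy.
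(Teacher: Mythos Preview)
The paper does not prove this statement at all: it is quoted verbatim as \cite[Corollary 9.7]{quillen1978homotopypropertiesofposets} and used as a black box in the proofs of \cref{IrelW-connectivity} and \cref{I-connectivity}. There is no ``paper's own proof'' to compare against.

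As for the content of your sketch, it is broadly in the spirit of Quillen's original argument, but several points are imprecise. Your description of the spectral sequence as having the coefficient system ``supported in the single row $q = h(y)-1$'' is misleading, since $h(y)$ varies with $y$; what one actually uses is the vanishing range $\tilde H_q(f_{\leq y}) = 0$ for $q < h(y)$ combined with the $(n-h(y)-1)$-sphericity of $Y_{>y}$, and this is precisely Quillen's Theorem~9.1 rather than a Leray spectral sequence in the form you wrote. For the link step, your restriction $X_{>x_k} \to Y_{\geq f(x_k)}$ has the wrong target: since $f$ is strictly increasing, $x > x_k$ forces $f(x) > f(x_k)$, so the map lands in $Y_{> f(x_k)}$, and one must also check that the height function on this upper link matches the height of the restricted fibers---this is where strict monotonicity of $f$ is genuinely used. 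Finally, your proposal is written as a plan (``I would apply'', ``I would use'') rather than as a proof; if you intend to include an argument, you should carry out at least the fiber-lemma step and one representative link case in full.
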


The second result is a generalization of Quillen's \cite[Theorem 
9.1]{quillen1978homotopypropertiesofposets} due to van der Kallen--Looijenga.

\begin{theorem}[{\cite[Corollary 2.2]{vanderkallenlooijenga2011sphericalcomplexesattachedtosymplecticlattices}}] \label{vanderkallenlooijenga}
  Let $f: X \to Y$ be a poset map, $\theta \in \mbZ$, and $t: Y \to \mbZ$ an 
  increasing (if $y' < y$, then $t(y') < t(y)$) but bounded function. Suppose 
  that for every $y \in Y$, the poset fiber $f_{\leq y} = \{x \in X: f(x) \leq 
  y\}$ is $(t(y)-2)$-connected and that the upper link $Y_{>y}$ is $(\theta - 
  t(y) - 1)$-connected. Then the map $f$ is $\theta$-connected.
\end{theorem}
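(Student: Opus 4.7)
The plan is to prove Theorem \ref{vanderkallenlooijenga} by generalizing Quillen's argument for Theorem \ref{quillen} (more precisely, Quillen's Theorem 9.1, the underlying statement about $n$-connectedness of poset maps with highly connected fibers). The point is that the theorem encodes a trade-off: whenever the fiber $f_{\leq y}$ is poorly connected (small $t(y)$), the upper link $Y_{>y}$ is forced to be highly connected to compensate, and the two contributions together add up to $\theta$.

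First I would reduce to the case where $Y$ is finite, using that $t$ is bounded and that $\theta$-connectivity of a map can be detected on compact subcomplexes of the (order-complex) realization. Then I would proceed by induction on $|Y|$. For the inductive step, pick $y_0 \in Y$ whose removal preserves the hypotheses (a maximal element is cleanest, since then $Y_{>y}$ is unchanged for every $y \in Y \setminus \{y_0\}$), set $Y' = Y \setminus \{y_0\}$ and $X' = f^{-1}(Y')$, and compare $f' : X' \to Y'$ with $f : X \to Y$. The complement $|X| \setminus |X'|$ decomposes into simplices whose top vertex lies in $f^{-1}(y_0)$, and the gluing of these simplices back to $|X'|$ is controlled jointly by the fiber $f_{\leq y_0}$ and by a subposet isomorphic to $Y_{>y_0}$ (parametrising how chains in $Y$ end above $y_0$). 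A standard pushout/Mayer--Vietoris connectivity estimate combines the $(t(y_0)-2)$-connectivity of the fiber with the $(\theta - t(y_0) - 1)$-connectivity of the upper link to show that $|X'| \hookrightarrow |X|$ is $\theta$-connected. Combined with the inductive hypothesis for $f'$ and the analogous claim for $|Y'| \hookrightarrow |Y|$, this yields $\theta$-connectivity of $f$.

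The main obstacle will be identifying the attaching locus of the newly added simplices cleanly, and verifying that its connectivity is governed precisely by $Y_{>y_0}$ rather than by some more complicated subcomplex involving fibers over elements comparable with $y_0$; one also has to check that the interaction between the fiber and the upper link in the pushout estimate is additive in the expected way ($a$-connected glued along $b$-connected gives $(a+b+1)$-connected, modulo the usual indexing shifts). A cleaner alternative, which I would fall back on if the direct induction becomes unwieldy, is to realise $|X|$ as the homotopy colimit $\operatorname{hocolim}_{y \in Y} |f_{\leq y}|$ over $Y$ and run the Bousfield--Kan spectral sequence: its $E^1$-page pairs the homotopy groups of the fibers against the reduced homology of the upper links $Y_{>y}$, and the two hypotheses of the theorem are exactly what is needed to force vanishing in the range implying that $f$ is $\theta$-connected.
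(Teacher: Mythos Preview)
The paper does not prove this theorem; it is quoted verbatim from van der Kallen--Looijenga and used as a black box. So there is no ``paper's own proof'' to compare against.

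On the substance of your sketch: the primary inductive argument contains a genuine error. You write that removing a maximal element $y_0$ leaves $Y_{>y}$ unchanged for every $y\in Y\setminus\{y_0\}$. This is false: whenever $y<y_0$ one has $y_0\in Y_{>y}$, so $Y'_{>y}=Y_{>y}\setminus\{y_0\}\subsetneq Y_{>y}$, and you no longer know that $Y'_{>y}$ is $(\theta-t(y)-1)$-connected. (Removing a maximal $y_0$ preserves the \emph{fibers} $f_{\leq y}$, not the upper links; removing a minimal $y_0$ has the opposite effect. Neither choice preserves both hypotheses, so the one-element-at-a-time induction cannot be set up as stated.) There is a second, related slip: having chosen $y_0$ maximal, you then say the attaching of the new simplices is controlled by ``a subposet isomorphic to $Y_{>y_0}$'', but $Y_{>y_0}=\emptyset$ for a maximal $y_0$, so this cannot be the mechanism by which the upper-link hypothesis enters.

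Your fallback outline is the one that actually works and is essentially how the result is proved in the literature: realise $|X|$ as a homotopy colimit over $Y$ of the fibers $|f_{\leq y}|$ and run the associated spectral sequence (equivalently, filter $X$ by the value of $t\circ f$, adding an entire $t$-level at once rather than a single element). In that setting the link of what is glued at level $t(y)$ genuinely is a join of a piece controlled by $f_{\leq y}$ with a piece controlled by $Y_{>y}$, and the $(t(y)-2)$- and $(\theta-t(y)-1)$-connectivity assumptions combine via the join formula to give the required $\theta$-connectivity. If you want to rescue an inductive argument, rewrite it as an induction on the number of $t$-levels (or on height), not on $|Y|$.
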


We are now ready to formulate our alternative arguments for the first items of 
\cref{isigmadelta-connectivity}. These arguments are completely formal and analogous to the proof of e.g.\ \cite[Proposition 1.2.]{vanderkallenlooijenga2011sphericalcomplexesattachedtosymplecticlattices}; the major difference being the input from \cref{restrictedtitsbuildingcm} and \cref{bbaconnectivity}.

\begin{lemma}
	\label{IrelW-connectivity}
	Let $m \geq 0$ and $n \geq 1$. The complex $\Irel(W)$ is Cohen--Macaulay of 
	dimension $n-1$. In particular, \cref{it_conn_IrelW} of 
	\cref{isigmadelta-connectivity} holds.
\end{lemma}

\begin{lemma}
	\label{I-connectivity}
	Let $m \geq 0$ and $n \geq 1$. The complex $\Irel$ is Cohen--Macaulay of dimension 
	$n-1$. In particular, \cref{it_conn_Irel} of 	
	\cref{isigmadelta-connectivity} holds.
\end{lemma}

\cref{I-connectivity} can easily be deduced from \cite[Proposition 6.13.2]{putman2009aninfinitepresentationofthetorelligroup}, i.e.\ \cref{it_conn_Irel} of \cref{isigmadelta-connectivity}, or \cite[Proposition 1.2]{vanderkallenlooijenga2011sphericalcomplexesattachedtosymplecticlattices}. Since it is used in the proof that $\IArel$ is highly connected at the end of this section and in the sequel \cite{brueckpatztsroka2023inpreparation}, we included a short argument.

\begin{proof}[Proof of \cref{IrelW-connectivity} and \cref{I-connectivity}]
Let $P(\Irel(W))$ and $P(\Irel)$ denote the simplex posets of $\Irel(W)$ and $\Irel$, respectively. The two lemmas follow by considering the poset maps
$$f: P(\Irel(W)) \to T^{\omega,m}_n(W): \Delta \mapsto \ll \vec e_1, \dots, 
\vec e_m \rr_{\mbQ} \oplus \ll \Delta \rr_\mbQ$$
and 
$$f: P(\Irel) \to T^{\omega,m}_n: \Delta \mapsto \ll \vec e_1, \dots, \vec 
e_m \rr_{\mbQ} \oplus \ll \Delta \rr_\mbQ,$$
respectively, and invoking \cref{quillen}. Let $V \in T^{\omega,m}_n(W)$ or $V \in T^{\omega,m}_n$. The application of 
Quillen's result relies on the facts that $h(V) = \dim(V) - m - 1$, that 
$T^{\omega,m}_n(W)$ and $T^{\omega,m}_n$ are Cohen--Macaulay posets of 
dimension $(n-1)$ (see \cref{cor:properties-of-restricted-tits-building}) and 
the observation that $f_{\leq V} = \mathcal{B}^m(V \cap \mbZ^{2(m+n)})$, which 
is Cohen--Macaulay of dimension $(\dim(V)-m-1)$ by \cref{bbaconnectivity}.
\end{proof}

Finally, we formulate an alternative argument for 
\cref{it_conn_IdelrelW} of \cref{isigmadelta-connectivity}. 

\begin{proof}[Proof of 	\cref{it_conn_IdelrelW} of 
\cref{isigmadelta-connectivity}]
We will show that $P(\Idelrel(W))$, the simplex poset of $\Idelrel(W)$, is $(n-1)$-connected. There is a poset 
map $$f: P(\Idelrel(W)) \to T^{\omega, m}_n(W): \Delta \mapsto \ll \vec e_1, 
\dots, \vec e_m \rr_{\mbQ} + \ll \Delta \rr_\mbQ.$$
Let $\theta = n$ and 
define $t: T^{\omega, m}_n(W) \to \mbZ: V \mapsto \dim(V)- m +1$. By 
\cref{cor:properties-of-restricted-tits-building}, we know that $T^{\omega, 
m}_n(W)_{>V}$ is $((n - 1) - (\dim(V) -m - 1) - 2) = (\theta - t(V) 
-1)$-connected. Furthermore, $f_{\leq V} = P(\mathcal{BA}^m(V \cap 
\mbZ^{2(m+n)}))$ is $(\dim(V) - m - 1) = (t(V) - 2)$-connected by 
\cref{bbaconnectivity} for $\dim(V) \geq 1 + m$.
Therefore \cref{vanderkallenlooijenga} implies that $f$ 
is $n$-connected. By \cref{cor:properties-of-restricted-tits-building}, the 
target is contractible, hence $\Idelrel(W)$ is $(n-1)$-connected.
\end{proof}

We end this section by explaining how Putman's connectivity result for 
$\Isigdelrel$ (see \cref{isigmadelta-connectivity}) implies high-connectivity 
of $\IArel$.

\begin{corollary}\label{iaconnectivity}
  If $m \geq 0$ and $n \geq 1$, then $\IArel$ is $(n-1)$-connected.
\end{corollary}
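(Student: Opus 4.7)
The plan is to deduce the $(n-1)$-connectivity of $\IArel$ from \cref{it_conn_isigdelrel} of \cref{isigmadelta-connectivity} by showing that the inclusion $\Isigdelrel \hookrightarrow \IArel$ is itself $(n-1)$-connected; the conclusion then follows from the long exact sequence of the pair (or equivalently, from comparing Hurewicz/Whitehead). The simplices of $\IArel$ not in $\Isigdelrel$ are exactly the mixed simplices. Recall that a mixed simplex $\Delta = \{v_0, v_1, \ldots, v_k\}$ carries a distinguished 2-additive vertex $v_0$ (satisfying $\vec v_0 = \pm \vec v_1 \pm \vec v_2$), a standard spine $\{v_1, \ldots, v_{k-1}\}$, and a distinguished $\sigma$ vertex $v_k$ (with $\omega(\vec v_k, \vec v_{k-1}) = \pm 1$). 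Both codimension-one faces $\Delta \setminus \{v_0\}$ and $\Delta \setminus \{v_k\}$ already lie in $\Isigdelrel$.

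I would employ a bad simplex argument in the style of Hatcher--Vogtmann and Church--Putman. Given a simplicial map $\phi : S^d \to \IArel$ with $d \leq n-1$, I would process the mixed simplices in $\im(\phi)$ in decreasing order of dimension. At each step, I pick a mixed simplex $\Delta$ of maximal dimension in $\im(\phi)$ and homotope $\phi$ locally on a neighborhood of $\phi^{-1}(\Delta)$; the replacement factors through a filling inside $\Star_{\IArel}(\Delta)$ whose boundary data is controlled by $\Link_{\IArel}(\Delta)$, with the goal that the filling lands in $\Isigdelrel$.

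The key input is therefore the structure and connectivity of $\Link_{\IArel}(\Delta)$. A case analysis across the four simplex types should show that any simplex of $\IArel$ properly containing $\Delta$ must again be mixed, and must in fact inherit $v_0$ and $v_k$ as its distinguished 2-additive and $\sigma$ vertices; so only new vertices added to the standard spine produce simplices in the link. Consequently, $\Link_{\IArel}(\Delta)$ identifies with the full subcomplex of $\Link_{\I[m+n]}(\{v_1, \ldots, v_{k-1}\})$ on vertices $w$ satisfying $\omega(\vec v_k, \vec w) = 0$. This is an $\I$-type complex in a symplectic subspace of strictly smaller rank, and its connectivity can be extracted from \cref{I-connectivity}. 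I expect the main obstacle to be the numerical bookkeeping: one needs $\Link_{\IArel}(\Delta)$ to be $(n-d-2)$-connected when $\dim \Delta = d$ so that the push-off works through spheres of dimension $n-1$, and one must additionally ensure the filling used in each local modification lies in $\Isigdelrel$ and not merely in $\IArel$, which may force processing the mixed simplices of maximal dimension first so that the fillings encounter no further mixed simplices.
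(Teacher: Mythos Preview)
Your strategy matches the paper's: both deduce the result from \cref{isigmadelta-connectivity} via a Hatcher--Vogtmann bad-simplex argument, with the key input that links of mixed simplices in $\IArel$ are $\I$-type complexes whose connectivity is supplied by \cref{I-connectivity}. Your link identification is correct. The one substantive point where the paper differs from your sketch is the choice of bad set: rather than processing mixed simplices of \emph{maximal} dimension, the paper takes $B$ to be the set of \emph{minimal} mixed simplices and applies \cite[Corollary~2.2]{hatchervogtmann2017tethers} directly. This is the cleaner route because every mixed simplex contains a unique minimal mixed face, so for $\Delta\in B$ the good link in the Hatcher--Vogtmann sense coincides with the full link $\Link_{\IArel}(\Delta)\cong\Irel[n-\dim\Delta][m+\dim\Delta]$, and the numerics fall out immediately.

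By contrast, the maximal-first procedure you describe runs into exactly the difficulty you anticipated but did not resolve. Pushing a non-minimal mixed $\Delta$ off through $\Link_{\IArel}(\Delta)$ produces new simplices of the form $\Delta''\ast\tau$ with $\Delta''\subsetneq\Delta$ a proper mixed face and $\tau$ standard; these are again mixed and their dimension is not bounded by $\dim\Delta-1$, so termination is not automatic. There is also a mismatch in the filling step: when $\Delta$ is maximal in $\im(\phi)$, the link of a preimage simplex in the domain sphere maps into the faces of $\Delta$ rather than into $\Link_{\IArel}(\Delta)$, so the boundary data is not where you need it. Working with minimal mixed simplices avoids both issues. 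One small further omission: in the relative complex the $2$-additive relation can read $\vec v_0=\pm\vec e_i\pm\vec v_1$ for some $e_i\in\{e_1,\dots,e_m\}$, giving a second shape of minimal mixed simplex that your description of the ``distinguished $2$-additive vertex'' does not cover.
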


\begin{proof}
	By \cref{it_conn_isigdelrel} of \cref{isigmadelta-connectivity}, 
	the subcomplex $X_0 = \Isigdelrel$ of $X_1 = \IArel$ is $(n-1)$-connected. 
	We will apply the standard link argument explained in \cite[§2.1]{hatchervogtmann2017tethers} and \cite[Corollary 
	2.2]{hatchervogtmann2017tethers} to conclude that $X_1$ is $(n-1)$-connected as well.
	Let $B$ be the set of \emph{minimal} mixed simplices contained in $X_1$, 
	i.e.\ $B$ is the set of simplices $\Delta = \Delta' \ast \Theta$ in $\IArel$ consisting of a 
	$\sigma$ edge $\Theta = \{v, w\}$ and a 2-additive simplex of the form 
	$\Delta' = \{\ll \pm \vec v_1 \pm \vec v_2 \rr, v_1, v_2\}$ or $\Delta' = 
	\{\ll \pm \vec e_i \pm \vec v_1 \rr, v_1\}$ where $e_i \in \{e_1, \dots, 
	e_m\}$. Here, we call $\Delta' = \{\ll \pm \vec e_i \pm \vec v_1 \rr, 
	v_1\}$ a 2-additive simplex in $\IArel$ if $\{\ll \pm \vec e_i \pm \vec 
	v_1 \rr, v_1, e_i\}$ is a $2$-additive simplex in $\IA[m+n]$.
	We note that, by \cref{def_linkhat}, a simplex $\Delta$ in $\IArel$ is mixed (i.e.\ $\Delta \cup \{e_1, \dots, e_n\}$ is mixed in $\IA[m+n]$) 
	if and only if $\Delta$ has a unique face contained in $B$.
	This property implies that $B$ is a set of bad simplices in the sense of \cite[§2.1]{hatchervogtmann2017tethers}: 
	\cite[§2.1, Condition (1)]{hatchervogtmann2017tethers} holds, since any simplex in $X_1 = \IArel$ with no face in $B$ has to be in $X_0 = \Isigdelrel$.
	\cite[§2.1, Condition (2)]{hatchervogtmann2017tethers} holds, because if two faces of a simplex in $X_1 = \IArel$ are in $B$ these faces need to be equal.
	For $\Delta \in B$, the complex of simplices that are good for $\Delta$ is therefore 
	given by $\Link_{X_1}^{good}(\Delta) = \Link_{X_1}(\Delta)$.
	Now, $\Link_{X_1}(\Delta) \cong \Irel[n-\dim(\Delta)+1][m+\dim(\Delta)-2]$ and, by 
	\cref{I-connectivity}, this is (even more than) $(n-\dim(\Delta)-2)$-connected for every 
	minimal mixed simplex $\Delta$. Since $X_0 = \I[n]^{\sigma, \delta, m}$ is 
	$(n-1)$-connected (see \cref{isigmadelta-connectivity}), \cite[Corollary 
	2.2]{hatchervogtmann2017tethers} therefore implies that $X_1 = \IArel$ is 
	$(n-1)$-connected as well.
\end{proof}

\section{Symplectic integral apartment classes}
\label{sec:symplectic-modular-symbols}

Following \cite[Section 3]{gunnells2000symplecticmodularsymbols}, we explain 
the construction of the symplectic integral apartment class map appearing in 
\cref{gunnellstheorem}, $$[-]: \mbZ[\Sp{2n}{\mbZ}] \to \St^\omega_n = 
\tilde{H}_{n-1}(T^\omega_n; \mbZ).$$
The image $$[M] \in \St^\omega_n = \tilde{H}_{n-1}(T^\omega_n; \mbZ)$$ of an 
integral symplectic matrix $M \in \Sp{2n}{\mbZ}$ under this map is called its 
\emph{integral apartment class}. To define these homology classes, we use the 
following notation and observations.

\begin{definition} \label{apartmentmodel}
	Let $\sym{n} := \{ 1, \bar{1}, \dots, n, \bar{n}\}$. A nonempty subset $I \subseteq \sym{n}$ is called a standard subset if for all $1 \leq a \leq n$ it holds that $\{a,\bar{a}\} \not\subset I$. We denote by $\partial\beta_n$ the simplicial complex whose vertex set is $\sym{n}$ and whose $k$-simplices are the standard subsets $I \subset \sym{n}$ of size $k+1$.
\end{definition}

Observe that $\partial\beta_1 = \{1, \bar{1}\} \cong S^0$ and that the inclusion of vertex sets $\sym{n} \subseteq \sym{n+1}$ induces an inclusion of simplicial complexes $\partial\beta_n \hookrightarrow \partial\beta_{n+1}$ for any $n \in \mbN$. It is readily verified that $\partial\beta_{n+1}$ is exactly the simplicial join $\partial\beta_{n} \ast \{n+1, \overbar{n+1}\}$. It follows that $\partial\beta_n \cong \ast_{1}^{n} S^0$ is a simplicial sphere of dimension $n-1$ and that $\partial\beta_{n+1} = \partial\beta_{n} \ast \{n+1, \overbar{n+1}\}$ is obtained from $\partial\beta_n$ by suspension. We fix a fundamental class $\xi = \xi_{0} \in \tilde{H}_{0}(\partial\beta_1; \mbZ)$ once and for all. Using the suspension isomorphism, this class gives rise to fundamental classes $\xi = \xi_{n-1} \in \tilde{H}_{n-1}(\partial\beta_n;\mbZ)$ for all $n \in \mbN$.

Given an integral symplectic matrix $M \in \Sp{2n}{\mbZ}$, its 
column vectors form a symplectic basis of $\mbQ^n$. We may index the $2n$ 
column vectors from left to right by $\sym{n} =  \{ 1, \bar{1}, \dots, n, 
\bar{n}\}$. By the definition of the symplectic form $\omega$, this indexing $M 
= (\vec M_a)_{a \in \sym{n}}$ has the property that if $I \in \partial\beta_n$ 
is a simplex, then $$M_I = \langle \{\vec M_a: a \in I\} \rangle_\mbQ$$ is an 
isotropic 
subspace of $\mbQ^{2n}$. This implies 
that for every $M \in \Sp{2n}{\mbZ}$, we can define a poset map
$$ \partial M : P(\partial\beta_n) \to T_n^\omega: I \mapsto M_I,$$
where $P(\partial\beta_n)$ denotes the poset of simplices of $\partial\beta_n$. 
Note that, passing to the associated order complexes of these posets, $\partial 
M$ defines a simplicial embedding. Since the order complex of the poset 
$P(\partial\beta_n)$ 
is the barycentric subdivision of $\partial\beta_n$, it follows that the image 
of the map $\partial M$ is a subcomplex that is homeomorphic to an 
$(n-1)$-sphere. Such a subcomplex is called an \emph{integral apartment} of the 
Tits building $T_n^\omega$. Taking homology, we obtain a map
$$\partial M_{\star} : \tilde{H}_{n-1}(P(\partial\beta_n); \mbZ) \to \St^\omega_n.$$

Barycentric subdivision of simplicial complexes comes with a natural homology isomorphism on chain level $b: C_\star \to C_\star \circ P$, where $C_\star$ assigns an ordered simplicial complex its simplicial chain complex with trivial $\mbZ$-coefficients. Using the induced isomorphism
$$b : \tilde{H}_{n-1}(\partial\beta_n;\mbZ) \to \tilde{H}_{n-1}(P(\partial\beta_n);\mbZ),$$
we obtain a unique class $b(\xi) \in \tilde{H}_{n-1}(P(\partial\beta_n);\mbZ)$ for every $n \in \mbN$.

\begin{definition} \label{modularsymbolmap}
	The \emph{symplectic integral apartment class} $[M] \in \St^\omega_n$ of $M 
	\in \Sp{2n}{\mbZ}$ is defined to be the value of $\partial M_{\star} : 
	\tilde{H}_{n-1}(P(\partial\beta_n);\mbZ) \to \St^\omega_n$ at $b(\xi)$,
	$$[M] := \partial M_{\star}(b(\xi)).$$
	This defines a map $$[-]: \mbZ[\Sp{2n}{\mbZ}] \to \St^\omega_n: M \mapsto 
	[M]$$ which we called the \emph{symplectic integral apartment class map}.
\end{definition}
\begin{remark} \label{rationalsymbols}
	The construction of symplectic apartment classes described above also works 
	if one starts with an element in the rational symplectic group $M \in 
	\Sp{2n}{\mbQ}$. This leads to the definition of a symplectic 
	\emph{rational} apartment class map
	$$[-]: \mbZ[\Sp{2n}{\mbQ}] \to \St^\omega_n.$$
	It follows from the proof of the Solomon--Tits Theorem (see 
	\cite{solomon1969thesteinbergcharacterofafinitegroup} or \cite[Section IV.5, Theorem 2]{brown1989buildings}) 
	that this map is a surjection.
	Gunnells' theorem states that its restriction to the ``much smaller'' group ring 
	$\mbZ[\Sp{2n}{\mbZ}]$ is still a surjection.
	The ``smallness'' of $\mbZ[\Sp{2n}{\mbZ}]$ can be illustrated from a building theoretic perspective: Every apartment in the complete system of apartments of the building $T_n^\omega$ (in the sense of \cite[Section IV.4]{brown1989buildings}) can be obtained by a $\Sp{2n}{\mbQ}$-translation, but not by a $\Sp{2n}{\mbZ}$-translation, of the standard apartment.
\end{remark}
\begin{remark}
	Gunnells' proof for \cref{gunnellstheorem} was inspired by work of 
	Ash--Rudolph 
	\cite{ashrudolph1979themodularsymbolandcontinuedfractionsinhigherdimensions}
	 and based on the content of \cref{rationalsymbols}. The general strategy 
	is to devise an algorithm that takes as input a \emph{rational} apartment 
	classes $[M] \in \St^\omega_n$ for $M \in \Sp{2n}{\mbQ}$ and outputs a 
	linear combination of \emph{integral} apartment classes that is equal to 
	$[M]$.
\end{remark}
\begin{remark}
	The integral apartment classes introduced in 
	\cref{modularsymbolmap} are called ``unimodular symbols'' in 
	\cite{gunnells2000symplecticmodularsymbols}, and 
	the rational apartment classes in \cref{rationalsymbols} are 
	called ``symplectic modular symbols'' in 
	\cite{gunnells2000symplecticmodularsymbols}.
	More generally, the terminology that we use in this note is close to that of \cite{churchfarbputman2019integralityinthesteinbergmodule, 	churchputman2017thecodimensiononecohomologyofslnz, 	brueckmillerpatztsrokawilson2022onthecodimensiontwocohomologyofslnz,	bruecksantosregosroka2022onthetopdimensionalcohomologyofarithmeticchevalleygroups}, while the terminology in Gunnells' paper 	\cite{gunnells2000symplecticmodularsymbols} is close to that of 	\cite{ashrudolph1979themodularsymbolandcontinuedfractionsinhigherdimensions}.
\end{remark}

\section{A new proof of Gunnells' theorem}
\label{sec:new-proof-of-gunnells-theorem}
In this final section, we present a new proof of Gunnells' \cref{gunnellstheorem}, i.e.~we prove that the integral apartment class map $$[-]: \mbZ[\Sp{2n}{\mbZ}] \to \St^{\omega}_n$$ is surjective. 
Our strategy is to factor it into a composition of four maps and then verify that each of these is a surjection. This is analogous to the strategy employed by Church--Farb--Putman in \cite{churchfarbputman2019integralityinthesteinbergmodule}. Gunnells' theorem then follows from the following two propositions, whose proof we will explain in the remainder of this work.

\begin{proposition} \label{firststep} If $n \geq 1$, there exists a commutative diagram of the following shape
  \begin{center}
    \begin{tikzcd}
      \mbZ[\Sp{2n}{\mbZ}] \arrow[d, "\alpha"] \arrow[ddrr, bend left, "{[-]}"] & &\\
      H_n(\IA, \I^\delta) \arrow[r, "\delta"] & \tilde{H}_{n-1}(\I^\delta) 
      \arrow[d, "b"] &\\
      & \tilde{H}_{n-1}(P(\I^\delta)) \arrow[r, "s_\star"] & 
      \tilde{H}_{n-1}(T_n^\omega) = \St^{\omega}_n
    \end{tikzcd}
  \end{center}
  where $[-]: \mbZ[\Sp{2n}{\mbZ}] \to \St^{\omega}_n$ is the integral apartment class map, $\delta$ is the connecting morphism of the long exact sequence of the pair $(\IA, \I^\delta)$ and $b$ is the homology isomorphism coming from barycentric subdivision.
\end{proposition}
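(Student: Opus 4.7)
The plan is to construct both unknown arrows in the diagram explicitly. For $s$, I take the natural poset map $s \colon P(\Idel) \to T^\omega_n$ sending $\Delta \mapsto \ll \Delta \rr_\mbQ$; this is well-defined because the $\mbQ$-span of any standard or $2$-additive simplex of $\Idel$ is a nontrivial proper isotropic subspace of $\mbQ^{2n}$, and it is weakly monotone. For $\alpha$, I define it on each generator $M \in \Sp{2n}{\mbZ}$ and extend $\mbZ$-linearly. The idea is to lift the apartment map $\partial M \colon \partial \beta_n \to T^\omega_n$ of \cref{modularsymbolmap} to a simplicial $n$-disk $D_M \subseteq \IA$ whose topological boundary is $\partial M(\partial \beta_n) \subseteq \I \subseteq \Idel$.

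\textbf{Construction of $D_M$.} Given $M \in \Sp{2n}{\mbZ}$, the relation $\omega(\vec M_1, \vec M_{\bar 1}) = 1$ makes $\{M_1, M_{\bar 1}\}$ a $\sigma$ edge of $\IA$. Let $\partial \beta_{n-1}^{(1)} \subseteq \partial \beta_n$ denote the subcomplex spanned by $\sym{n} \setminus \{1, \bar 1\}$, a simplicial $(n-2)$-sphere, and let $K_M \subseteq \I$ be its image under $\partial M$. Set $D_M := \{M_1, M_{\bar 1}\} \ast K_M$, the simplicial join. Every top-dimensional simplex of $D_M$ has the form $\{M_1, M_{\bar 1}\} \cup \{M_a : a \in J\}$ for a maximal standard subset $J \subseteq \sym{n} \setminus \{1, \bar 1\}$, and is a $\sigma$ $n$-simplex of $\IA$ with special vertex $M_{\bar 1}$ (using $\omega(\vec M_{\bar 1}, \vec M_1) = -1$ and $\omega(\vec M_{\bar 1}, \vec M_a) = 0$ for $a \in J$). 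A direct inspection shows that every lower-dimensional face of $D_M$ is either standard (hence in $\I$) or again of the above $\sigma$ form, so $D_M \subseteq \IA$. Topologically $D_M \cong D^1 \ast S^{n-2} \cong D^n$, with boundary equal to $\partial M(\partial \beta_n)$. Fix the orientation of the fundamental class $[D_M] \in H_n(D_M, \partial D_M) \cong \mbZ$ so that $\partial[D_M]$ agrees with $(\partial M)_\star \xi$ in $\tilde H_{n-1}(\Idel)$, and let $\alpha(M) \in H_n(\IA, \Idel)$ be the image of $[D_M]$ under the inclusion of pairs $(D_M, \partial D_M) \hookrightarrow (\IA, \Idel)$.

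\textbf{Commutativity.} By construction, $\delta(\alpha(M)) = (\partial M)_\star \xi$. Naturality of the barycentric subdivision chain map $b$ applied to the simplicial map $\partial M \colon \partial \beta_n \to \Idel$ gives $b((\partial M)_\star \xi) = (P \partial M)_\star(b(\xi))$, where $P \partial M \colon P(\partial \beta_n) \to P(\Idel)$ is the induced poset map $I \mapsto \{M_a : a \in I\}$. Moreover, the composite $s \circ P \partial M$ sends a standard subset $I$ to $\ll M_a : a \in I \rr_\mbQ = M_I$, so it agrees with the poset map $\partial M \colon P(\partial \beta_n) \to T^\omega_n$ from \cref{modularsymbolmap}. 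Chasing through the diagram, one obtains $s_\star(b(\delta \alpha(M))) = (s \circ P \partial M)_\star(b(\xi)) = \partial M_\star(b(\xi)) = [M]$, establishing the required commutativity.

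\textbf{Main obstacle.} The delicate step is sign bookkeeping: the fundamental class $\xi_{n-1}$ is built by iterated suspensions over the pairs $\{k, \bar k\}$ for $k = 2, \ldots, n$ starting from $\xi_0 \in \tilde H_0(\partial \beta_1)$, while $[D_M]$ is obtained by joining the $\sigma$ edge $\{M_1, M_{\bar 1}\}$ with the analogous iterated join over the pairs $\{k, \bar k\}$ for $k \geq 2$. Using the join boundary formula $\partial(A \ast B) = (\partial A) \ast B \pm A \ast (\partial B)$ together with the recursive definition of $\xi$, an inductive sign check produces an orientation of $D_M$ for which $\partial[D_M] = (\partial M)_\star \xi$ holds on the nose. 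The remaining verifications --- that $D_M$ lies in $\IA$, that $s$ is weakly monotone, and the naturality of $b$ --- are immediate from \cref{typesofsimplices} and the setup of \cref{sec:symplectic-modular-symbols}.
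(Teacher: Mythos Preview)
Your proof is correct and follows essentially the same approach as the paper: the paper also defines $s$ as the span map $\Delta \mapsto \ll \Delta \rr_{\mbQ}$ and defines $\alpha(M)$ as the relative fundamental class of a simplicial $n$-disc in $\IA$ with boundary $\partial M^\alpha(\partial\beta_n) \subseteq \I \subseteq \Idel$, then checks commutativity via naturality of the connecting morphism, naturality of $b$, and the identity $s \circ P(\partial M^\alpha) = \partial M$. The only cosmetic difference is that the paper builds the disc from an abstract model $\beta_n = (\ast_1^{n-1} S^0) \ast D^1$ using the \emph{last} symplectic pair $\{n,\bar n\}$ as the $\sigma$ edge (which dovetails with the inductive decomposition in the proof of \cref{secondstep}), whereas you take the join with the \emph{first} pair $\{M_1, M_{\bar 1}\}$ directly inside $\IA$; this is a harmless reindexing.
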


The morphisms $\alpha$ and $s_\star$ in the statement of \cref{firststep} are 
defined below.

\begin{proposition} \label{secondstep} If $n \geq 1$, then the maps occurring in \cref{firststep} satisfy:
  \begin{enumerate}
  \item \label{secondstep_one} $\alpha: \mbZ[\Sp{2n}{\mbZ}] \to H_n(\IA, 
  \I^\delta)$ is a surjection.
  \item \label{secondstep_two} $\delta: H_n(\IA, \I^\delta) \to 
  \tilde{H}_{n-1}(\I^\delta)$ is a 
  surjection.
  \item \label{secondstep_three} $b : \tilde{H}_{n-1}(\I^\delta) \to 
  \tilde{H}_{n-1}(P(\I^\delta))$ is an isomorphism.
  \item \label{secondstep_four} $s_\star: \tilde{H}_{n-1}(P(\I^\delta)) \to 
  \St^{\omega}_n$ is an  isomorphism.
  \end{enumerate}
\end{proposition}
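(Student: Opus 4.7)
Parts \cref{secondstep_two} and \cref{secondstep_three} should follow formally. For \cref{secondstep_two}, I would apply the long exact sequence of the pair $(\IA, \I^\delta)$,
\[
\tilde{H}_n(\IA) \to H_n(\IA, \I^\delta) \xrightarrow{\delta} \tilde{H}_{n-1}(\I^\delta) \to \tilde{H}_{n-1}(\IA),
\]
together with \cref{iaconnectivity} (applied with $m = 0$), which yields $\tilde{H}_{n-1}(\IA) = 0$ and hence surjectivity of $\delta$. Part \cref{secondstep_three} is the standard isomorphism identifying the homology of a simplicial complex with that of its poset of simplices via barycentric subdivision.

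For \cref{secondstep_four}, the plan is to apply \cref{vanderkallenlooijenga} to the poset map $s \colon P(\I^\delta) \to T_n^\omega$ sending a simplex $\{v_0, \ldots, v_k\}$ to its isotropic $\mbQ$-span, taking $\theta = n$ and $t(V) \coloneqq \dim V + 1$. The upper link $(T_n^\omega)_{>V}$ is a spherical Tits building of type $\mathtt{C}_{n-\dim V}$ on the symplectic quotient $V^\perp/V$ and is therefore $(n - \dim V - 2) = (\theta - t(V) - 1)$-connected. The fiber $s_{\leq V}$ is the simplex poset of the full subcomplex of $\I^\delta$ on lines contained in $V$; by $\Sp{2n}{\mbZ}$-transitivity on isotropic summands of a given rank (Witt's theorem) and \cref{bbaconnectivity}, it is $(\dim V - 1) = (t(V) - 2)$-connected. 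Hence $s$ is $n$-connected, and $s_\star$ is in particular an isomorphism on $\tilde{H}_{n-1}$.

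The heart of the argument is \cref{secondstep_one}. The plan is to define $\alpha$ as follows. Given $M \in \Sp{2n}{\mbZ}$ with columns $M_a$ indexed by $a \in \sym{n}$, let $A_M^{<n} \subseteq \I$ denote the apartment subcomplex on the lines $M_a$ for $a \in \sym{n-1}$; this is an $(n-2)$-sphere. Since $\omega(\vec M_n, \vec M_{\bar n}) = \pm 1$, the set $\{M_n, M_{\bar n}\}$ is a $\sigma$-edge of $\IA$, and a direct verification shows that the simplicial join
\[
D_M \coloneqq A_M^{<n} \ast \{M_n, M_{\bar n}\}
\]
is a subcomplex of $\IA$ whose $2^{n-1}$ top-dimensional simplices are all $\sigma$ $n$-simplices. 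The oriented sum of these top cells yields an $n$-chain in $C_n(\IA)$ whose boundary is the apartment sphere $\partial M(\partial\beta_n) \subseteq \I \subseteq \I^\delta$, since the internal $\sigma$ $(n-1)$-faces cancel between adjacent top cells while each standard apartment face appears exactly once. I define $\alpha(M) \in H_n(\IA, \I^\delta)$ to be the class of this relative $n$-cycle; the commutativity of the diagram in \cref{firststep} is then immediate from the construction.

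The main obstacle is the surjectivity of $\alpha$. The plan is to show that every generator of $C_n(\IA, \I^\delta)$, namely each $\sigma$ or mixed $n$-simplex of $\IA$, lies in the image of $\alpha$ modulo relative boundaries. A $\sigma$ $n$-simplex $\Delta = \{v_0, \ldots, v_n\}$ can be extended, using the unimodularity of the pairing $\omega(\vec v_{n-1}, \vec v_n) = \pm 1$, to a full symplectic basis of $\mbZ^{2n}$, producing a matrix $M \in \Sp{2n}{\mbZ}$ for which $\Delta$ appears as one of the $2^{n-1}$ top cells of $D_M$; signed combinations of the classes $\alpha(M')$ for matrices $M'$ obtained from $M$ by carefully chosen column operations should then isolate $[\Delta]$ modulo relative boundaries supported in $\I^\delta$. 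Mixed $n$-simplices are to be reduced to $\sigma$ $n$-simplices by computing boundaries of $(n+1)$-dimensional mixed simplices of $\IA$, which realize the 2-additive relation $\vec v_0 = \pm \vec v_1 \pm \vec v_2$ at the level of relative chains. Making this combinatorial reduction precise and keeping track of signs across the $2^{n-1}$ top cells of each $D_M$ will be the crux of the argument, in the spirit of the linear case treated in \cite{churchfarbputman2019integralityinthesteinbergmodule}.
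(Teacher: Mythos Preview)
Your arguments for \cref{secondstep_two}, \cref{secondstep_three}, and \cref{secondstep_four} are correct and coincide with the paper's, down to the choice of $\theta = n$ and $t(V) = \dim V + 1$ in the application of \cref{vanderkallenlooijenga}.

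For \cref{secondstep_one}, your construction of $\alpha$ agrees with the paper's, but your surjectivity argument has a genuine gap. You propose to show that each $\sigma$ or mixed $n$-simplex, as a generator of $C_n(\IA,\I^\delta)$, lies in the image of $\alpha$ modulo relative boundaries, and in particular to ``isolate $[\Delta]$'' for a single $\sigma$ $n$-simplex $\Delta$. But for $n \geq 2$ an individual $\sigma$ $n$-simplex $\Delta = \{v_0,\ldots,v_n\}$ is \emph{not} a relative cycle: deleting any $v_i$ with $i \leq n-2$ produces a $\sigma$ $(n-1)$-simplex, which does not lie in $\I^\delta$. Hence $[\Delta]$ has no meaning in $H_n(\IA,\I^\delta)$, and even if your column-operation scheme expressed $\Delta$ chain-wise in terms of the $\alpha(M')$, this would not by itself give surjectivity on homology. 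What you would actually need is a description of the genuine relative $n$-cycles, and your sketch does not supply one.

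The paper takes a different route that sidesteps this chain-level bookkeeping entirely. It argues by induction on $n$; the base case $n=1$ is elementary since then every edge of $\IA[1]$ is a $\sigma$ edge and $H_1(\IA[1],\I[1]^\delta)$ is free on them. For the induction step, one observes that $\IA$ is built from $\I^\delta$ by attaching the stars of all $\sigma$ edges, and excision gives
\[
H_n(\IA,\I^\delta) \;\cong\; \bigoplus_{\Delta \in E^\omega_n} H_n\bigl(\Star_{\IA}(\Delta),\,\Sigma^1\Link_{\IA}(\Delta)\bigr) \;\cong\; \bigoplus_{\Delta \in E^\omega_n} \tilde{H}_{n-2}\bigl(\mathcal{I}^\delta(\Delta^\perp)\bigr) \;\cong\; \bigoplus_{\Delta \in E^\omega_n} \St^\omega(\Delta^\perp),
\]
using $\Link_{\IA}(\Delta)=\mathcal{I}^\delta(\Delta^\perp)$ and parts \cref{secondstep_three}, \cref{secondstep_four} in rank $n-1$. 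One then checks that, under this decomposition and the corresponding splitting of $\mbZ[\Sp{2n}{\mbZ}]$ according to the last symplectic column pair, $\alpha_n$ restricts on each summand to the apartment class map for $\Sp{}{\Delta^\perp}\cong\Sp{2(n-1)}{\mbZ}$, which is surjective by the induction hypothesis combined with parts \cref{secondstep_two}--\cref{secondstep_four}. This inductive excision argument is the missing idea.
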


To define the morphisms $\alpha$ and $s_\star$ in the 
statement of \cref{firststep}, we start by introducing a simplicial complex, 
which is closely related to the complex $\partial\beta_n$ occurring in the 
definition of the apartment class map.

\begin{definition}
  We call a nonempty subset $I \subset \sym{n}$ a \emph{$\sigma$ subset}, if $\{n, \bar{n}\} \subset I$ and for all $1 \leq a \leq n-1: \{a, \bar{a}\} \not\subset I$. Let $\beta_n$ be the simplicial complex with vertex set $\sym{n}$ and $k$-simplices subsets $I \subset \sym{n}$ of size $k+1$, which are either standard (see \cref{apartmentmodel}) or $\sigma$ subsets.
\end{definition}

Note that $\beta_1 \cong D^1$. Furthermore, $\beta_n \cong (\ast^{n-1}_1 S^0) \ast D^1$ is homeomorphic to a disc of dimension $n$ whose boundary sphere is triangulated by the subcomplex $\partial\beta_n \subset \beta_n$, i.e.\ $$(|\beta_n|, |\partial\beta_n|) \cong (D^n, S^{n-1})$$

The definition of the map $\alpha$ involves the following construction: Let $M 
= (\vec M_a)_{a \in \sym{n}} \in \Sp{2n}{\mbZ}$. Given a $k$-simplex $I$ of 
$\beta_n$, we find an associated simplex $M_I^\alpha = \{\langle \vec M_a 
\rangle_{\mbZ} : a \in I \}$ of $\IA$: If $I$ is a standard subset, then 
$M_I^\alpha$ 
is a standard simplex. If $I$ is $\sigma$ subset, then $M_I^\alpha$ is a 
$\sigma$ simplex. The resulting map$$M^\alpha: \beta_n \to \IA$$is a simplicial 
embedding and the boundary $$\partial M^\alpha: \partial\beta_n \to \IA$$ of 
this simplicial disc is contained in $\Idel$, i.e.\ $$\partial M^\alpha: 
\partial\beta_n \to \I^\delta \to \IA.$$

\begin{definition}
	The value $\alpha(M)$ of the map $\alpha: \mbZ[\Sp{2n}{\mbZ}] \to H_n(\IA, 
	\I^\delta)$ at a matrix $M = (\vec M_a)_{a \in \sym{n}} \in \Sp{2n}{\mbZ}$ 
	is 
	defined to be the image of the fundamental class $\xi \in 
	\tilde{H}_{n-1}(\partial\beta_n)$ under the composition
\begin{center}
  \begin{tikzcd}
    \tilde{H}_{n-1}(\partial\beta_n) & H_{n}(\beta_n, \partial\beta_n) 
    \arrow[l, "\cong", swap] \arrow{rrr}{(M^\alpha, \partial M^\alpha)_\star} 
    &&& H_n(\IA, \I^\delta),
  \end{tikzcd}
\end{center}
where the first isomorphism is the connecting morphism associated to the pair 
$(\beta_n, \partial\beta_n)$.
\end{definition}

Finally, we define the map $s_\star$.

\begin{definition} 
	$s_\star: \tilde{H}_{n-1}(P(\I^\delta)) \to \St^{\omega}_n$ is the map 
	induced in homology by the spanning map 
	\[
	s: P(\I^\delta) \to T_n^\omega: \Delta \mapsto \langle \Delta \rangle_\mbQ,
	\]
	where $P(\I^\delta)$ denotes the poset of simplices of $\I^\delta$.
\end{definition}

\subsection{Proof of \texorpdfstring{\cref{firststep}}{Proposition 5.1}} Let $M 
\in \Sp{2n}{\mbZ}$. We need to verify that $$[M] = (s_\star \circ b \circ 
\delta \circ \alpha)(M).$$

Consider the following diagram:

\begin{center}
  \begin{tikzcd}
    H_{n}(\beta_n, \partial\beta_n) \arrow[r, "\cong"] \arrow{d}{(M^\alpha, 
    \partial M^\alpha)_\star} & \tilde{H}_{n-1}(\partial\beta_n) 
    \arrow{d}{\partial M_\star^\alpha} \arrow[r, "b"] & 
    \tilde{H}_{n-1}(P(\partial \beta_n)) \arrow{d}{P(\partial M^\alpha)_\star}\\
    H_n(\IA, \I^\delta) \arrow[r, "\delta"] & \tilde{H}_{n-1}(\I^\delta) 
    \arrow[r, "b"] & \tilde{H}_{n-1}(P(\I^\delta))
  \end{tikzcd}
\end{center}

The left square commutes because the connecting morphism of the long exact sequence of a pair is a natural transformation. The right square commutes because $b: C_\star \to C_\star \circ P$ is a natural homology isomorphism. It follows that
$$(b \circ \delta \circ \alpha)(M) = (P(\partial M^\alpha)_\star \circ b)(\xi) 
\in \tilde{H}_{n-1}(P(\I^\delta)).$$
To complete the proof, we need to see that
$$(s \circ P(\partial M^\alpha) \circ b)_\star(\xi) = [M],$$
where $[M] = (\partial M \circ b)_\star (\xi)$ is as in \cref{modularsymbolmap}. This holds because the composition $(s \circ P(\partial M^\alpha))$ defined in this section is equal to the map $\partial M$ defined in the paragraph before \cref{modularsymbolmap}.

\subsection{Proof of \texorpdfstring{\cref{secondstep}}{Proposition 5.2}} The 
arguments for \cref{secondstep_two}, \cref{secondstep_three} and 
\cref{secondstep_four} of \cref{secondstep} are similar to the arguments used 
by Church--Farb--Putman in the setting of $\SL{n}{\mbZ}$
\cite{churchfarbputman2019integralityinthesteinbergmodule}. However, while the 
analogue of the surjectivity of the map $\alpha_n: 
\mbZ[\Sp{2n}{\mbZ}] \to H_n(\IA, \I^\delta)$ is rather immediate for special 
linear groups, this step (i.e.\ \cref{secondstep_one} of \cref{secondstep}) is 
more involved for symplectic groups. The reason is that 
apartments in the Tits building of type $\mathtt{A}_{n-1}$, which is used in 
the argument for $\SL{n}{\mbZ}$, have the same combinatorial structure as the 
boundary of an $(n-1)$-simplex $\partial \Delta^{n-1}$ and can therefore be 
``filled'' by gluing in a single simplex of dimension $n-1$. Apartments of the 
Tits building of type $\mathtt{C}_n$, which we use here for $\Sp{2n}{\mbZ}$, 
have a different simplicial structure. They are modelled by the complex 
$\partial \beta_n$ and require multiple simplices to be ``filled''. In the 
complex $\IA$, this is achieved by $\sigma$ simplices. Observe that $\sigma$ 
simplices already occur in dimension one. Therefore, and in contrast to the 
analogous situation for special linear groups (see \cite[Step 1, 2.3 Proof of 
Theorem B]{churchfarbputman2019integralityinthesteinbergmodule}), the relative 
chain complex $C_\star(\IA,\I^\delta)$ is nontrivial in degree $\star = n-1$.

\begin{proof}[Proof of \cref{secondstep_two} of \cref{secondstep}] 
\cref{iaconnectivity} implies that $\tilde{H}_{n-1}(\IA) = 0$.  Hence, the long 
exact sequence of the pair $(\IA, \I^\delta)$ implies that $\delta: 
H_n(\IA, \I^\delta) \to \tilde{H}_{n-1}(\I^\delta)$ is surjective.
\end{proof}

\begin{proof}[Proof of \cref{secondstep_three} of \cref{secondstep}] 
The map $b : \tilde{H}_{n-1}(\I^\delta) \to \tilde{H}_{n-1}(P(\I^\delta))$ is 
an isomorphism by definition. It is induced by a natural homology isomorphism 
of chain complexes.
\end{proof}

\begin{proof}[Proof of \cref{secondstep_four} of \cref{secondstep}] 
To verify that the map $s_\star: \tilde{H}_{n-1}(P(\I^\delta)) \to 
\tilde{H}_{n-1}(T^\omega_n) = \St^{\omega}_n$ is an isomorphism, we can apply 
\cref{vanderkallenlooijenga} once more. Let $\theta = n$. Let $V \in 
T^\omega_n$ and set $t(V) = \dim(V) + 1$. 
\cref{cor:properties-of-restricted-tits-building} for $m = 0$ implies that the 
upper link $(T^{\omega}_n)_{>V}$ is $((n - 1) - (\dim(V) - 1) - 2) = (\theta - 
t(V) -1)$-connected. \cref{bbaconnectivity} for $m = 0$ implies that the lower 
fiber $f_{\leq V} = \mathcal{BA}(V \cap \mbZ^n)$ is $(\dim(V)-1) = 
(t(V)-2)$-connected. Therefore, it follows from \cref{vanderkallenlooijenga} 
that $s: 
P(\I^\delta) \to T^\omega_n$ is $n$-connected and, hence, that the map 
$s_\star: \tilde{H}_{n-1}(P(\I^\delta)) \to \tilde{H}_{n-1}(T^\omega_n) = 
\St^{\omega}_n$ is an 
isomorphism.
\end{proof}

Let $n \in \mbN$. In the following, $E^\omega_n$ denotes the set of all 
$\sigma$ edges in $\IA$. The proof of \cref{secondstep_one} of 
\cref{secondstep} is by induction on $n \geq 1$. The base case $n = 1$ is 
the content of the next lemma.

\begin{lemma}
	\label{secondstep_one_ib}
	If $n=1$, then $\alpha_n: \mbZ[\Sp{2n}{\mbZ}] \to H_n(\IA, \I^\delta)$ is 
	surjective.
\end{lemma}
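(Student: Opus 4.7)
The plan is to identify $H_1(\IA[1], \Idel[1])$ explicitly as a free abelian group on an obvious basis and then exhibit a preimage under $\alpha_1$ for each basis element. First I would describe the complexes for $n=1$: since every primitive vector in $\mbZ^2$ spans an isotropic line while no two distinct lines together span a $2$-dimensional isotropic subspace of $\mbQ^2$, the complex $\I[1]$ is just the discrete set of all primitive lines, and $\Idel[1] = \I[1]$ because the definition of a $2$-additive simplex requires at least three vertices. The only extra simplices in $\IA[1]$ are therefore the $\sigma$-edges $\{v, w\}$ with $\omega(\vec v, \vec w) = \pm 1$; there are no $2$-additive or mixed simplices in dimension $\leq 1$.

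From this description I can read off the relative chain complex: since $\Idel[1]$ already contains every vertex of $\IA[1]$ and $\IA[1]$ has no simplices of dimension $\geq 2$, the complex $C_*(\IA[1], \Idel[1])$ is concentrated in degree one with zero boundary. Hence $H_1(\IA[1], \Idel[1]) = C_1(\IA[1])$ is the free abelian group on the oriented $\sigma$-edges of $\IA[1]$.

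It then remains to unpack $\alpha_1$ and verify surjectivity. Given $M \in \Sp{2}{\mbZ}$ with columns $\vec M_1, \vec M_{\bar 1}$, the simplicial embedding $M^\alpha: \beta_1 \to \IA[1]$ sends the unique $1$-simplex $\{1, \bar 1\}$ of $\beta_1$ to the $\sigma$-edge $\{\langle \vec M_1 \rangle_\mbZ, \langle \vec M_{\bar 1} \rangle_\mbZ\}$ of $\IA[1]$, and chasing through the connecting morphism of $(\beta_1, \partial \beta_1)$ shows that $\alpha_1(M)$ is the relative class of this $\sigma$-edge, with an orientation determined by $\xi$. To realize an arbitrary $\sigma$-edge $\{v, w\}$ of $\IA[1]$ as $\pm \alpha_1(M)$, which suffices for surjectivity since $\mbZ[\Sp{2}{\mbZ}]$ admits the scalar $-1$, I choose primitive representatives $\vec v, \vec w$ with $\omega(\vec v, \vec w) = 1$—possible by replacing $\vec w$ with $-\vec w$ if necessary—and set $M \coloneqq (\vec v, \vec w) \in \Sp{2}{\mbZ}$. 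No substantial obstacle arises; the argument is essentially pure bookkeeping once the explicit description of the relative chain complex is in hand.
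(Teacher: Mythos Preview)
Your proof is correct and follows essentially the same approach as the paper: identify $\Idel[1]=\I[1]$ as the $0$-skeleton of $\IA[1]$, conclude that $H_1(\IA[1],\Idel[1])$ is free on the $\sigma$-edges, and then observe that each $\sigma$-edge arises as $\alpha_1(M)$ for a suitable $M\in\Sp{2}{\mbZ}$. The only cosmetic difference is that the paper handles the sign on the generator by adjusting the signs of the columns of $M$, whereas you invoke $\mbZ$-linearity of $\alpha_1$; both are fine.
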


\begin{proof}
	For $2n = 2$, it follows that $\IA[1]$ is a one-dimensional connected 
	simplicial complex, that all edges are $\sigma$ edges and that 
	$\I[1]^\delta = \I[1]$ is exactly the $0$-skeleton of $\IA[1]$.\footnote{In 
	fact, $\Sp{2}{\mbZ} = \SL{2}{\mbZ}$ and $\IA[1]$ is isomorphic to the 
	1-dimensional complex of partial frames $\mathcal{B}(\mbZ^2)$. The complex 
	$\mathcal{B}(\mbZ^2)$ is discussed in detail in the introduction of 
	\cite{churchputman2017thecodimensiononecohomologyofslnz} (see paragraph 
	``Improving connectivity: the complex of partial augmented frames'').} In 
	particular, $$H_1(\IA[1], \I[1]^\delta) \cong \bigoplus_{\Delta \in 
	E^\omega_1} \mbZ.$$
	Given some $M \in \Sp{2}{\mbZ}$ with $M = (\vec v,\vec w)$, we see that 
	$M^\alpha(\beta_1) \subset \IA[1]$ is exactly the $\sigma$ edge $\Delta = 
	\{ v, w\}$ and $M^\alpha(\partial\beta_1) \subset \I[1]^\delta$ is exactly 
	the boundary of this edge. Hence, under the identification above, 
	$\alpha_1$ maps the symplectic matrix $M$ to a generator of the 
	$\mbZ$-summand indexed by $\Delta = \{ v,  w\}$. Given any $\sigma$ edge 
	$\Delta = \{ v,  w\}$, we have that $\omega(\vec v, \vec w) = \pm 1$. Thus, 
	for some choice of signs $(\pm \vec v,\pm \vec w) \in \Sp{2}{\mbZ}$. It 
	follows that $\alpha_n$ is surjective for $n = 1$.
\end{proof}

\begin{proof}[Proof of \cref{secondstep_one} of \cref{secondstep}]
To see that the map $$\alpha_n: \mbZ[\Sp{2n}{\mbZ}] \to H_n(\IA, \I^\delta)$$ 
is surjective, we perform an induction on $n \geq 1$. The induction beginning 
$n = 1$ is \cref{secondstep_one_ib}. Let $n > 1$ and assume that 
\cref{secondstep_one} of \cref{secondstep} holds for $1 \leq k \leq n-1$. We 
deduce the surjectivity of the map $\alpha_n$ in two steps.

The first step is to show that the target $H_n(\IA, \I^\delta)$ is 
a direct sum of ``smaller'' Steinberg modules. For this, we observe that $\IA$ 
is obtained from $\I^\delta$ via the following pushout diagram:
\begin{center}
  \begin{tikzcd}
    \bigsqcup_{\Delta \in E^\omega_n} \Sigma^1 \Link_{\IA}(\Delta) \arrow[r, "\cong"] &\bigsqcup_{\Delta \in E^\omega_n} \Star_{\IA}(\Delta) \cap \I^\delta \arrow[r] \arrow[d, hook] & \I^\delta \arrow[d, hook]\\
    &\bigsqcup_{\Delta \in E^\omega_n} \Star_{\IA}(\Delta) \arrow[r] & \IA
  \end{tikzcd}
\end{center}

In particular, excision implies that 
\begin{equation} \label{eq1}
  H_n(\IA, \I^\delta) \cong \bigoplus_{\Delta \in E^\omega_n} H_n(\Star_{\IA}(\Delta), \Sigma^1 \Link_{\IA}(\Delta)).
\end{equation}

The contractibility of $\Star_{\IA}(\Delta)$ implies that the connecting morphism of the pair $$(\Star_{\IA}(\Delta), \Sigma^1 \Link_{\IA}(\Delta))$$ is an isomorphism
\begin{equation} \label{eq2}
  H_n(\Star_{\IA}(\Delta), \Sigma^1 \Link_{\IA}(\Delta)) \xrightarrow{\delta_n} 
  \tilde{H}_{n-1}(\Sigma^1 \Link_{\IA}(\Delta)).
\end{equation}

The suspension isomorphism gives an identification
\begin{equation} \label{eq3}
  \tilde{H}_{n-1}(\Sigma^1 \Link_{\IA}(\Delta)) \xrightarrow{\Sigma^{-1}} 
  \tilde{H}_{n-2}(\Link_{\IA}(\Delta)).
\end{equation}

Observe that $\Link_{\IA}(\Delta) = \mathcal{I}^\delta(\Delta^\perp)$, where $\Delta^\perp \coloneqq \langle \Delta \rangle^\perp \subset 
\mbQ^{2n}$. We therefore proved that
\begin{equation} \label{eq4}
  H_n(\IA, \I^\delta) \cong \bigoplus_{\Delta \in E^\omega_n} 
  \tilde{H}_{n-2}(\mathcal{I}^\delta(\Delta^\perp)) \cong \bigoplus_{\Delta \in 
  E^\omega_n} \St^\omega(\Delta^\perp)
\end{equation}
where the last isomorphism is obtained by invoking \cref{secondstep_three} and 
\cref{secondstep_four} of \cref{secondstep} and $\St^\omega(\Delta^\perp)$ 
denotes the Steinberg module of the symplectic subspace $\Delta^\perp \subset 
\mbQ^{2n}$. This completes the first step.

The second step of the proof that $\alpha_n: \mbZ[\Sp{2n}{\mbZ}] \to H_n(\IA, 
\I^\delta)$ is surjective is to decompose the domain $\mbZ[\Sp{2n}{\mbZ}]$ in a 
compatible way and identify the resulting map on each summand. This is the 
content of the following claim.

\begin{claim} 
  \label{identification}
  Let $\Delta = \{v, w\} \in E^\omega_n$ and let $\omega(\vec v, \vec w) = 1$, 
  $\tilde\Delta = (\vec v, \vec w)$ an ordered pair. Let 
  $\mbZ[\Sp{}{\tilde\Delta^\perp}] \subset \mbZ[\Sp{2n}{\mbZ}]$ be the 
  $\mbZ$-summand spanned by symplectic matrices $M \in \Sp{2n}{\mbZ}$ 
  satisfying $\vec M_{n} = \vec v$ and $\vec M_{\bar{n}} = \vec w$. The 
  sequence of 
  identifications above yields a map 
  $$[-]_{\tilde\Delta}:\mbZ[\Sp{}{\tilde\Delta^\perp}] \to 
  H_{n-1}(\mathcal{IA}(\Delta^\perp), \mathcal{I}^\delta(\Delta^\perp)) \to 
  \tilde{H}_{n-2}(\mathcal{I}^\delta(\Delta^\perp)) \to 
  \St^\omega(\Delta^\perp)$$ that is exactly the integral apartment class map 
  of the group $\Sp{}{\Delta^\perp}$ of symplectic automorphisms of the summand 
  $\Delta^\perp \subset \mbZ^{2n}$.
\end{claim}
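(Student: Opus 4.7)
The plan is to trace the image of $M$ through the three arrows defining $[-]_{\tilde\Delta}$ and check that the result agrees with $[M|_{\tilde\Delta^\perp}] \in \St^\omega(\Delta^\perp)$ as constructed in \cref{modularsymbolmap} for the symplectic subgroup $\Sp{}{\Delta^\perp}$.

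The first observation is that the apartment $\beta_n$ admits simplicial join decompositions
\[
\beta_n \cong \partial\beta_{n-1} \ast \bar\Delta_1, \qquad \partial\beta_n \cong \partial\beta_{n-1} \ast \{n, \bar n\},
\]
where $\bar\Delta_1$ denotes the 1-simplex on the vertices $\{n, \bar n\}$. Because the hypotheses $\vec M_n = \vec v$ and $\vec M_{\bar n} = \vec w$ ensure $M^\alpha(\bar\Delta_1) = \Delta$, the apartment embedding $M^\alpha\colon \beta_n \to \IA$ factors as the simplicial join of the embedding of $\bar\Delta_1$ onto the sigma edge $\Delta$ with an embedding of $\partial\beta_{n-1}$ into $\Link_{\IA}(\Delta) = \mathcal{I}^\delta(\Delta^\perp)$; this latter embedding is exactly the apartment map $\partial(M|_{\tilde\Delta^\perp})^\alpha$ associated via \cref{modularsymbolmap} to the restricted matrix $M|_{\tilde\Delta^\perp} \in \Sp{}{\Delta^\perp}$ on the $(n-1)$-dimensional symplectic space $\Delta^\perp$.

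Next, I will unwind each of the three identifications defining $[-]_{\tilde\Delta}$. The excision isomorphism \eqref{eq1} projects $\alpha(M)$ onto the $\Delta$-summand, which is the only nonzero contribution since $\Delta$ is the unique sigma edge of $M^\alpha(\beta_n)$; there it becomes the relative fundamental class of the simplicial disc $\Delta \ast (M|_{\tilde\Delta^\perp})^\alpha(\partial\beta_{n-1})$. The connecting morphism $\delta_n$ from \eqref{eq2} followed by the inverse suspension from \eqref{eq3} peels off the join factor $\Delta$, and by compatibility of this suspension isomorphism with the inductive definition of the fundamental classes $\xi_k$ fixed in \cref{sec:symplectic-modular-symbols}, the output is the cycle class $\partial(M|_{\tilde\Delta^\perp})^\alpha_\star(\xi_{n-2}) \in \tilde H_{n-2}(\mathcal{I}^\delta(\Delta^\perp))$. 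The final arrow of $[-]_{\tilde\Delta}$ is $s_\star \circ b$ from \cref{firststep} applied to $\Sp{}{\Delta^\perp}$, which by \cref{modularsymbolmap} sends this cycle to $[M|_{\tilde\Delta^\perp}]$.

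The main obstacle is the compatibility in the middle step: I need to verify that the suspension isomorphism in \eqref{eq3} agrees on fundamental classes with the one used inductively in \cref{sec:symplectic-modular-symbols}. This is a naturality statement for the suspension isomorphism applied to the join decompositions above, and it can be checked at chain level using that $\Sigma^1 \Link_{\IA}(\Delta) = \partial\Delta \ast \Link_{\IA}(\Delta)$ as a simplicial subcomplex of $\Star_{\IA}(\Delta) = \Delta \ast \Link_{\IA}(\Delta)$. Once this is settled, the computed image of $[-]_{\tilde\Delta}(M)$ matches the definition of $[M|_{\tilde\Delta^\perp}]$ directly.
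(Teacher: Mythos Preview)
Your proposal is correct and follows essentially the same route as the paper's proof: both exploit the join decomposition $\beta_n \cong \partial\beta_{n-1} \ast \bar\Delta_1$, factor $(M^\alpha,\partial M^\alpha)$ through $(\Star_{\IA}(\Delta),\Sigma^1\Link_{\IA}(\Delta))$, and then use naturality of the connecting morphism together with the suspension isomorphism to reduce to $\partial(\widetilde{M})^\alpha_\star(\xi_{n-2})$ in $\tilde H_{n-2}(\mathcal{I}^\delta(\Delta^\perp))$, after which $s_\star\circ b$ yields $[\widetilde{M}]$. The only cosmetic difference is that the paper first normalises to $(\vec v,\vec w)=(\vec e_n,\vec f_n)$ via a symplectic change of basis and records the naturality arguments as two explicit commutative squares, whereas you argue directly in the general case; the content is the same.
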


Before proving this claim, we explain how this finishes the proof of the 
induction step and hence of \cref{secondstep}. \autoref{identification} implies 
that the following diagram commutes.

\begin{center}
  \begin{tikzcd}
    \mbZ[\Sp{2n}{\mbZ}] \arrow[r, equal] \arrow[d, "\alpha_n"] & 
    \bigoplus_{\tilde\Delta = (\vec M_{n},\vec M_{\bar{n}})} 
    \mbZ[\Sp{}{\tilde\Delta^\perp}] \arrow[d, "\oplus {[-]}_{\tilde\Delta}"]\\
    H_n(\IA, \I^\delta) \arrow[r, "\cong"] & \bigoplus_{\Delta \in E^\omega_n} 
    \St^\omega(\Delta^\perp)
  \end{tikzcd}
\end{center}

The induction hypothesis, \autoref{identification} and \cref{secondstep_two}, 
\cref{secondstep_three} and \cref{secondstep_four} of \cref{secondstep} imply 
that the integral apartment class maps occurring on the right hand side of the 
above diagram,
$$[-]_{\tilde\Delta}:\mbZ[\Sp{}{\tilde\Delta^\perp}] \to 
\St^\omega(\Delta^\perp),$$
are surjective. For any $\Delta \in E^\omega_n$, there exists an ordered pair 
$\tilde\Delta = (\vec M_{n},\vec M_{\bar{n}})$ such that $\Delta = \{\langle 
\vec M_{n} \rangle_{\mbZ}, \langle \vec M_{\bar{n}} \rangle_{\mbZ}\}$. It 
follows that the right vertical map in the diagram is surjective. Therefore, 
$\alpha_n$ is surjective as well.
\end{proof}

\begin{proof}[Proof of \autoref{identification}.] It suffices to consider the 
case where $(\vec v, \vec w) = (\vec e_{n}, \vec f_{n})$ consists of the last 
symplectic pair of the standard symplectic basis. All other cases can be 
reduced to this case by applying a symplectic matrix that sends $(\vec v, \vec 
w)$ to $(\vec e_{n}, \vec f_{n})$. Let 
$\Delta = \{ \langle \vec e_{n} \rangle_{\mbZ}, \langle \vec f_{n} 
\rangle_{\mbZ}\}$, $\tilde\Delta = (\vec e_{n}, \vec f_{n})$ and $M \in 
\Sp{2n}{\mbZ}$ a symplectic matrix with $\vec M_{n} = \vec e_{n}$ and $\vec 
M_{\bar{n}} = 
\vec f_{n}$. The symplectic relations imply that the $\vec e_{n}$- and $\vec 
f_{n}$-coordinates of all other column vectors $\vec M_a, \vec 
M_{\bar{a}}$, where $a \in \{1, \dots, n-1\}$, of $M$ are zero. In 
particular, $M$ corresponds to a unique 
element $\widetilde{M}$ of the symplectic group $\Sp{}{\Delta^\perp}$ of the 
summand $\Delta^\perp \subset \mbZ^{2n}$ and vice versa. Recall that the class 
$\alpha(M) \in H_n(\IA, \I^\delta)$ was defined using the map of pairs:
$$(M^\alpha, \partial M^\alpha): (\beta_n, \partial \beta_n) \to (\IA, \I^\delta)$$
This map factors through the pair
$$(\Star_{\IA}(\Delta), \Star_{\IA}(\Delta) \cap \I^\delta) \cong (\Star_{\IA}(\Delta), \Sigma^1 \Link_{\IA}(\Delta)).$$
The naturality of connecting morphisms yields a commutative diagram:
\begin{center}
	\adjustbox{scale=0.9,center}{
  	\begin{tikzcd}
    H_n(\beta_n, \partial \beta_n) \arrow[r, "\delta"] \arrow{d}{(M^\alpha, 
    \partial M^\alpha)_\star} & \tilde{H}_{n-1}(\partial\beta_n) 
    \arrow{d}{\partial M^\alpha_\star} \arrow[r, "\Sigma^{-1}"] & 
    \tilde{H}_{n-2}(\partial \beta_{n-1}) 
    \arrow{d}{\partial\widetilde{M}^\alpha_\star}\\
    H_n(\Star_{\IA}(\Delta), \Sigma^1 \Link_{\IA}(\Delta)) \arrow[r, "\delta"] 
    & \tilde{H}_{n-1}(\Sigma^1 \Link_{\IA}(\Delta)) \arrow[r, "\Sigma^{-1}"] & 
    \tilde{H}_{n-2}(\Link_{\IA}(\Delta))
  	\end{tikzcd}
	}
\end{center}

Hence, under the identifications in \cref{eq1}, \cref{eq2} and \cref{eq3}, the class $$\alpha(M) \in H_n(\IA, \I^\delta)$$ is mapped to
$$\partial\widetilde{M}^\alpha_\star(\xi_{n-2}) \in 
\tilde{H}_{n-2}(\Link_{\IA}(\Delta)) = 
\tilde{H}_{n-2}(\mathcal{I}^\delta(\Delta^\perp)).$$
The following commuting square proves that 
$\partial\widetilde{M}^\alpha_\star(\xi_{n-2})$ is exactly $(\delta \circ 
\alpha_{n-1})(\widetilde{M})$:

\begin{center}
  \begin{tikzcd}
    H_n(\beta_{n-1}, \partial\beta_{n-1}) \arrow[r, "\delta"] 
    \arrow{d}{(\widetilde{M}^\alpha, \partial \widetilde{M}^\alpha)_\star} & 
    \tilde{H}_{n-1}(\partial\beta_{n-1}) \arrow{d}{\partial 
    \widetilde{M}_\star^\alpha}\\
    H_n(\mathcal{IA}(\Delta^\perp), \mathcal{I}^\delta(\Delta^\perp)) \arrow[r, 
    "\delta"] & \tilde{H}_{n-1}(\mathcal{I}^\delta(\Delta^\perp))
  \end{tikzcd}
\end{center}

Hence, the final identification used in \cref{eq4} and \cref{firststep} yield that $\alpha_n(M)$ is mapped to 
\[
(s_\star \circ b \circ \delta \circ \alpha_{n-1})(\widetilde{M}) = [\widetilde{M}] \in \St^\omega(\Delta^\perp) \qedhere
\]
\end{proof}

\emergencystretch=2em
\printbibliography

\end{document}